\newtheorem{theorem}{Theorem}[section]
\newtheorem{corollary}{Corollary}
\newtheorem{lemma}[theorem]{Lemma}
\newtheorem{proposition}{Proposition}
\theoremstyle{definition}
\newtheorem{definition}[theorem]{Definition}
\newtheorem{example}{Example}
\title[The turnpike with lack of observability] 
{The turnpike with lack of observability}
\author[Dario Pighin]{}
\author[Noboru Sakamoto]{}
\email{dario.pighin@uam.es}
\email{noboru.sakamoto@nanzan-u.ac.jp}
\thanks{(Noboru Sakamoto) Supported, in part, by JSPS KAK-
	ENHI Grant Number JP19K04446 and by Nanzan University Pache Research Subsidy I-A-2 for 2019 academic year.\\
	(Dario Pighin)
	This project has received funding from the European Research Council (ERC) under the European Union’s Horizon 2020 research and innovation programme (grant agreement NO. 694126-DyCon).}
\begin{document}
	\maketitle
	
	\centerline{\scshape Dario Pighin}
	\medskip
	{\footnotesize
		\centerline{Departamento de Matem\'aticas, Universidad Aut\'onoma de Madrid}
		\centerline{28049 Madrid, Spain}
	} 
	\medskip
	{\footnotesize
		\centerline{Chair of Computational Mathematics, Fundaci\'on Deusto}
		\centerline{University of Deusto, 48007, Bilbao, Basque Country, Spain}
	} 

	\medskip
	
	\centerline{\scshape Noboru Sakamoto}
	\medskip
	{\footnotesize
		\centerline{Faculty of Science and Engineering, Nanzan University}
		\centerline{Yamazato-cho 18, Showa-ku, Nagoya, 464-8673, Japan}
	} 

\begin{abstract}
	We obtain turnpike results for optimal control problems with lack of stabilizability in the state equation and/or detectability in the state term in the cost functional.\\
	We show how, under weakened stabilizability/detectability conditions, terminal conditions may affect turnpike phenomena.\\
	Numerical simulations have been performed to illustrate the theoretical results.
	
\end{abstract}

\section*{Introduction}

The purpose of this manuscript is to check the validity of the turnpike property for linear quadratic optimal control problems, with weak observation of the state in the cost functional. We consider the time-evolution optimal control problem
\begin{equation*}
\min_{u}J^{T}(u)=\frac12 \int_{0}^T \left[\|u(t)\|^2+\|Cx(t)-z\|^2\right] dt,
\end{equation*}
where:
\begin{equation*}
\begin{dcases}
\dot{x}=Ax+Bu\hspace{2.8 cm} & \mbox{in} \hspace{0.10 cm}(0,T)\\
\mbox{terminal conditions}
\end{dcases}
\end{equation*}
and the corresponding steady one
\begin{equation*}
\min_{(x,u)}J_s(x,u)=\frac12 \|u\|^2+\frac{1}{2}\|Cx-z\|^2,\hspace{0.6 cm}\mbox{with the constraint} \hspace{0.6 cm}0=Ax+Bu.
\end{equation*}
The time-evolution problem satisfies the \textit{turnpike property} if the time-evolution optimal pair $(u^T,x^T)$ approximate the steady optimal pair $(\overline{u},\overline{x})$ as the time horizon $T\to +\infty$.

Typically, for turnpike to hold, the pair $(A,B)$ is required to be stabilizable and the pair $(A,C)$ is asked to be detectable (see, e.g. \cite{porretta2013long,trelat2015turnpike,TGS}). Our goal is to check if turnpike holds for the full control and the detected state, under weakened detectability and controllability assumptions.

The study of the behaviour of control problems in long time and the turnpike property is a classical topic in the literature. We provide just some essential references. A pioneer on the topic has been the econometrician Paul Samuelson (see \cite{Samuelson1,Samuelson1972,LS}). Later on the topic has been studied both in Mathematics and in Economic Sciences \cite{mckenzie1963turnpike,wilde1972dichotomy,rockafellar1973saddle,haurie1976optimal,anderson1987optimal,rapaport2004turnpike,rapaport2005competition,faulwasser2019towards}. The infinite dimensional case has been explored \cite{allaire2010long,carlson2012infinite,damm2014exponential}. An extensive review on the topic is \cite{zaslavski2006turnpike}. More recently, the topic has been studied in \cite{porretta2013long,trelat2015turnpike,PZ2,trelat2018steady,trelat2018integral,gruneexponential,grune2019sensitivity,pighin2020turnpike}. Related results have been obtained in Mean Field Games (see, for instance, \cite{cardaliaguet2012long,cardaliaguet2013long}).

We distinguish two cases:
\begin{itemize}
	\item section \ref{sec:1}: free endpoint (the state left free at final time $t=T$);
	\item section \ref{sec:2}: fixed endpoint (the state has to match a given final target at time $t=T$).
\end{itemize}

In section \ref{sec:1}, we suppose the state is left free at $t=T$. We can then employ Kalman decomposition (see, e.g. \cite[section 3.3]{zhou1996robust}) to decompose the state space into a detectable part and an undetectable one. In Proposition \ref{prop_eqC}, we prove that an exponential turnpike property is satisfied by the full control and the detected state if and only if the observable modes are stabilizable. In particular, if the state equation is stabilizable, the turnpike property holds for the full control and the detected state, without any observability assumptions on the cost functional (Corollary \ref{cor_ocp}).

These results rely on the absence of final condition for the state. As a consequence of that, unobservable modes do not influence the value of the cost functional, i.e. they are irrelevant for the sake of optimization.

In subsections \ref{subsec:1.2} and \ref{subsec:1.3}, the above results are employed in the context of pointwise control of respectively the heat and wave equation. We project the state equation onto a finite number of Fourier modes. For the heat equation with potential, the full control and the observed state fulfils turnpike if and only if whenever an eigenfunction is non-zero on the observation point, either the same eigenfunction is non-zero on the control point or the point is stable for the free dynamics. For the wave equation, a more restricted condition is required. The turnpike property is verified by the full control and the observed state if and only if whenever an eigenfunction vanishes on the control point, it vanishes on the observation point as well.

In section \ref{sec:2}, we deal with a fixed endpoint problem. Therefore,
\begin{itemize}
	\item on the one hand, in the running cost we penalize only the observed state;
	\item on the other hand, the unobservable modes are relevant to fulfill the final condition. Namely, the unobservable component of the system enters in the definition of the set of admissible controls, where the functional is minimized.
\end{itemize}
For this reason, we need to assume controllability of the full state equation and we cannot employ Kalman decomposition to get rid of the unobservable component. In Proposition \ref{prop_controllability_case}, we prove that the turnpike property is verified for full control and state, if the unobservable modes of $A$ are not critical (they are not associated to a purely imaginary eigenvalue of the free dynamics). In particular, turnpike can hold even if both stable and unstable modes of $A$ are not observable. This condition is formulated as weak Hautus test.

Inspired by \cite{faulwasser2019towards}, we study a class of optimal control problems with fixed endpoint, where the Hamiltonian matrix may have imaginary eigenvalues. In Proposition \ref{prop_controllability_case_velocity} we prove that basically the exponential turnpike is satisfied by the control and the observed state, while the unobserved state is linear in time, up to an exponentially small remainder. We show how this result applies to the illustrative example in \cite[section 3]{faulwasser2019towards}.

\section{Free endpoint problem}
\label{sec:1}

\subsection{Statement of the main results}
\label{subsec:1.1}

We consider the linear quadratic optimal control problem:
\begin{equation}\label{functional_ocp}
\min_{u\in L^2(0,T;\mathbb{R}^m)}J^{T}(u)=\frac12 \int_{0}^T \left[\|u(t)\|^2+\|Cx(t)-z\|^2\right] dt,
\end{equation}
where:
\begin{equation}\label{linear_ocp}
\begin{dcases}
\dot{x}=Ax+Bu\hspace{2.8 cm} & \mbox{in} \hspace{0.10 cm}(0,T)\\
x(0)=x_0.
\end{dcases}
\end{equation}
The matrix $A\in\mathcal{M}_{n\times n}(\mathbb{R})$ describes the free dynamics, while the action of the control is defined by multiplication by the matrix $B\in\mathcal{M}_{n\times m}(\mathbb{R})$. $C\in\mathcal{M}_{n\times n}(\mathbb{R})$ is an observation matrix. By the Direct Methods in the Calculus of Variations and strict convexity, the above problem admits a unique optimal control denoted by $u^T$. The optimal state is denoted by $x^T$. Furthermore, by strict convexity, the optimal control $u^T$ is the unique solution to the optimality system

\begin{equation}\label{OS}
\begin{dcases}
\dot{x}^T(t)=Ax^T(t)-BB^*p^T(t)\hspace{1 cm}& t\in (0,T)\\
-\dot{p}^T(t)=A^*p^T(t)+C^*(Cx^T(t)-z)&t\in (0,T)\\
u^T(t)=-B^*p^T(t)&t\in (0,T)\\
x^T(0)=x_0\\
p^T(T)=0.
\end{dcases}
\end{equation}

The corresponding steady problem reads as
\begin{equation}\label{steady_functional_linear_quadratic_finite_dimension}
\min_{(x,u)}J_s(x,u)=\frac12 \|u\|^2+\frac{1}{2}\|Cx-z\|,\hspace{0.6 cm}\mbox{with the constraint} \hspace{0.6 cm}0=Ax+Bu.
\end{equation}

The well posedeness of the steady problem follows from the following Lemma.

\begin{lemma}\label{lemma_minimizer steady}
	Let $A\in \mathcal{M}_{n\times n}(\mathbb{R})$, $B\in\mathcal{M}_{n\times m}(\mathbb{R})$, $C\in\mathcal{M}_{n\times n}(\mathbb{R})$ and $z\in\mathbb{R}^n$. Set
	\begin{equation*}
	M\coloneqq\left\{ (u,x)\in\mathbb{R}^m\times\mathbb{R}^n \ | \ 0=Ax+Bu\right\}
	\end{equation*}
	and
	\begin{equation}\label{Js}
	J_s(u,x) \coloneqq \frac12\left[\|u\|^2+\|Cx-z\|^2\right].
	\end{equation}
	Then,
	\begin{enumerate}
		\item there exists $(\overline{u},\overline{x})\in M$ global minimizer for $J_s$ over $M$;
		\item the set of global minimizers of $J_s$ is given by
		\begin{equation*}
		\mbox{argmin}(J_s)=\left\{(\overline{u},\overline{x})\right\}+\left\{0\right\}\times\left[\ker(A)\cap \ker(C)\right] .
		\end{equation*}
	\end{enumerate}
\end{lemma}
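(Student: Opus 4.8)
The plan is to use that $J_s$ sees the state only through $Cx$, so that the subspace $V\coloneqq\ker(A)\cap\ker(C)$ is invisible both to the constraint and to the cost. First I would record the structural facts used throughout: $M$ is the kernel of the linear map $(u,x)\mapsto Ax+Bu$, hence a closed linear subspace of $\mathbb{R}^m\times\mathbb{R}^n$ (in particular convex), and $J_s$ is continuous, nonnegative and convex. Moreover, if $(u,x)\in M$ and $v\in V$, then $(u,x+v)\in M$ (because $A(x+v)+Bu=Ax+Bu=0$) and $J_s(u,x+v)=J_s(u,x)$ (because $C(x+v)=Cx$). This last remark already proves the inclusion $\{(\overline u,\overline x)\}+\{0\}\times V\subseteq\operatorname{argmin}(J_s)$, once some global minimizer $(\overline u,\overline x)$ has been produced.

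To produce a minimizer (item~(1)) I would reduce the problem to $V^\perp$. Splitting $\mathbb{R}^n=V\oplus V^\perp$ and writing $x=x_V+y$ with $y\in V^\perp$, one has $Ax=Ay$ and $Cx=Cy$, so minimizing $J_s$ over $M$ is equivalent to minimizing $\widetilde J(u,y)\coloneqq\frac12\bigl(\|u\|^2+\|Cy-z\|^2\bigr)$ over the closed subspace $\widetilde M\coloneqq\{(u,y)\in\mathbb{R}^m\times V^\perp:Ay+Bu=0\}$. The point is that $\widetilde J$ is coercive on $\widetilde M$: if a sequence $(u_k,y_k)\in\widetilde M$ had $\|(u_k,y_k)\|\to\infty$ while $\widetilde J(u_k,y_k)$ stayed bounded, then $(u_k)$ would be bounded and hence $\|y_k\|\to\infty$; passing to a subsequence with $y_k/\|y_k\|\to\overline y$, $\|\overline y\|=1$, dividing $Ay_k+Bu_k=0$ by $\|y_k\|$ gives $A\overline y=0$, while boundedness of $\|Cy_k-z\|$ forces $C\overline y=0$; thus $\overline y\in V\cap V^\perp=\{0\}$, contradicting $\|\overline y\|=1$. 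By the Weierstrass theorem $\widetilde J$ attains its minimum at some $(\overline u,\overline y)\in\widetilde M$, and then $(\overline u,\overline y)\in M$ is a global minimizer of $J_s$ over $M$.

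For the remaining inclusion $\operatorname{argmin}(J_s)\subseteq\{(\overline u,\overline x)\}+\{0\}\times V$ I would argue by strict convexity in the right variables. Let $(u^*,x^*)$ be any global minimizer and $d=J_s(\overline u,\overline x)$ the minimal value. Since $M$ is a subspace, the midpoint $m\coloneqq\frac12\bigl((u^*,x^*)+(\overline u,\overline x)\bigr)$ belongs to $M$, so $J_s(m)\ge d$; convexity of $J_s$ gives $J_s(m)\le d$, hence $J_s(m)=d$. Writing $J_s(m)$ explicitly and using that $t\mapsto\|t\|^2$ is strictly convex on $\mathbb{R}^m$ and $w\mapsto\|w-z\|^2$ is strictly convex on $\mathbb{R}^n$, the chain $d=J_s(m)\le\frac12 J_s(u^*,x^*)+\frac12 J_s(\overline u,\overline x)=d$ must consist of equalities, which is only possible if $u^*=\overline u$ and $Cx^*=C\overline x$. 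Subtracting the constraints $Ax^*+Bu^*=0$ and $A\overline x+B\overline u=0$ then yields $A(x^*-\overline x)=0$, and together with $C(x^*-\overline x)=0$ this gives $x^*-\overline x\in\ker(A)\cap\ker(C)=V$, as claimed.

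The main obstacle is the existence part: because $J_s$ is degenerate along $V$, it fails to be coercive on $M$ and a minimizing sequence need not be precompact. The cure is exactly the reduction to $V^\perp$ together with the homogeneity/normalization argument above; once existence is in hand, item~(2) is essentially bookkeeping with convexity and strict convexity.
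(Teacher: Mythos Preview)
Your proof is correct and follows essentially the same strategy as the paper: factor out the degenerate directions $V=\ker(A)\cap\ker(C)$ to recover compactness/coercivity for existence, then use strict convexity in the visible variables $(u,Cx)$ for the characterization. The only differences are cosmetic---the paper passes to the quotient $M/{\sim}$ and checks compactness of sublevel sets where you project onto $V^\perp$ and prove coercivity by normalization, and the paper phrases the uniqueness step as a contradiction where you use the midpoint argument---but the underlying ideas coincide.
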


We prove this Lemma in the Appendix.

Inspired by \cite[Definition 2.1, page 480]{BRC}, we give the following definition of $C$-stabilizability, namely stabilizability of the observable part of the state.
\begin{definition}\label{Cstabilizability}
	Let $A\in\mathcal{M}_{n\times n}(\mathbb{R})$, $B\in\mathcal{M}_{n\times m}(\mathbb{R})$ and $C\in\mathcal{M}_{n\times n}(\mathbb{R})$. $(A,B)$ is said to be $C$-stabilizable if there exists a feedback matrix $L\in \mathcal{M}_{m\times n}(\mathbb{R})$, such that
	\begin{equation*}
	\|C\exp\left(t\left(A-BL\right)\right)\|\leq K\exp(-\mu t),\hspace{0.3 cm}\forall \ t\geq 0,
	\end{equation*}
	for some $K$, $\mu>0$.
\end{definition}

We introduce the concept of $C$-turnpike, i.e. turnpike for the full control and the detected state. To this end, let us decompose the state space into a detectable part and an undetectable one. We start be defining some observability and detectability concepts. Let $T>0$ be a time horizon. The output operator $\psi_{T}:\mathbb{R}^n\longrightarrow L^2(0,+\infty;\mathbb{R}^n)$ is defined as
\begin{equation}\psi_{T}\left(\varphi_0\right)\coloneqq
\begin{cases}
C\exp\left(A t\right)\varphi_0 \hspace{0.3 cm} &t\in [0,T]\\
0 \hspace{0.3 cm} &t>T\\
\end{cases}
\end{equation}
for any $\varphi_0\in \mathbb{R}^n$. The subspace $NO(C,A)\coloneqq \ker\left(\psi_{T}\right)$ is called the unobservable space, which admits an algebraic representation \cite[Propositon 1.4.7]{OCO}
\begin{equation}\label{alg_repr_unobservable space}
	\ker\left(\psi_{T}\right)=\bigcap_{i=0}^{n-1}\ker\left(CA^i\right).
\end{equation}
We define the observable space as the orthogonal $\ker\left(\psi_{T}\right)^{\perp}$. The undetectbale space is defined as the subspace $NO^{0+}(C,A)\coloneqq \ker\left(\psi_{T}\right)\cap \mathscr{L}^{0+}(A)$, made of those unobservable modes which are not stable. By \eqref{alg_repr_unobservable space},
\begin{equation*}
	NO^{0+}(C,A)=\bigcap_{i=0}^{n-1}\ker\left(CA^i\right)\cap \mathscr{L}^{0+}(A).
\end{equation*}
The detectable space is defined as $W\coloneqq NO^{0+}(C,A)^{\perp}$.

We are now in position to decompose the state space into a detectable part and an undetectable one
\begin{equation*}
	\mathbb{R}^n=W\oplus NO^{0+}(C,A),
\end{equation*}
where
\begin{equation*}
	NO^{0+}(C,A) = \bigcap_{i=0}^{n-1}\ker\left(CA^i\right)\cap \mathscr{L}^{0+}(A)
\end{equation*}
and
\begin{equation*}
	W \coloneqq NO^{0+}(C,A)^{\perp}.
\end{equation*}
The matrix associated to the orthogonal projection onto $W$ is denoted by $D$, while the matrix associated to the projection onto $NO^{0+}(C,A)$ is indicated by $R$. For any $x\in \mathbb{R}^n$,
\begin{equation}\label{def_D}
	x=Dx+Rx.
\end{equation}
If $(A,C)$ is detectable, $D=I$ is the identity matrix.

\begin{definition}\label{Cturnpike}
	Let $A\in\mathcal{M}_{n\times n}(\mathbb{R})$, $B\in\mathcal{M}_{n\times m}(\mathbb{R})$ and $C\in\mathcal{M}_{n\times n}(\mathbb{R})$. Let $D$ be the corresponding projection onto the detectable space $W$, as in \eqref{def_D}. The triplet $(A,B,C)$ enjoys $C$-turnpike if, for any initial datum $x_0\in\mathbb{R}^n$ and target $z\in\mathbb{R}^n$, there exists $K=K(A,B,C,x_0,z)$ and $\mu=\mu(A,B,C)>0$, such that, for any $T>0$,
	\begin{equation*}
	\|u^T(t)-\overline{u}\|+\|Dx^T(t)-D\overline{x}\|\leq K\left[\exp(-\mu t)+\exp\left(-\mu\left(T-t\right)\right)\right],
	\end{equation*}
	where $(u^T,x^T)$ is the optimal pair for \eqref{linear_ocp}-\eqref{functional_ocp} and $\left(\overline{u},\overline{x}\right)$ is a minimizer for the steady functional \eqref{Js}.
\end{definition}

As we announced, the main assumption of the following Proposition is that observable modes are stabilizable.

\begin{proposition}\label{prop_eqC}
	In the above notation, the triplet $(A,B,C)$ enjoys $C$-turnpike if and only if $(A,B)$ is $C$-stabilizable.
\end{proposition}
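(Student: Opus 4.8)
The plan is to use the Kalman observability decomposition of $(A,C)$ to reduce the statement to a classical turnpike question on the observable subsystem, and then to recover the undetectable-but-stable modes by a cascade argument. Since $NO(C,A)=\bigcap_i\ker(CA^i)$ is $A$-invariant and contained in $\ker C$, in coordinates adapted to the orthogonal splitting $\mathbb{R}^n=NO(C,A)^\perp\oplus NO(C,A)$ the data take the block form $A=\left(\begin{smallmatrix}A_{11}&0\\ A_{21}&A_{22}\end{smallmatrix}\right)$, $B=\left(\begin{smallmatrix}B_1\\ B_2\end{smallmatrix}\right)$, $C=\left(\begin{smallmatrix}C_1&0\end{smallmatrix}\right)$, with $(A_{11},C_1)$ observable. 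Writing $x=(x_O,x_{NO})$, the running cost in \eqref{functional_ocp} depends only on $(x_O,u)$ and $\dot x_O=A_{11}x_O+B_1u$ is closed in $(x_O,u)$; since there is no terminal constraint, $x_{NO}$ is irrelevant to the minimization, and in \eqref{OS} the condition $p^T(T)=0$ forces the unobservable part of the adjoint to vanish identically. Hence $u^T$, the observable component $x_O^T$ of $x^T$, the adjoint $p_O^T$, and the quantities $\overline u$, $C\overline x$ of \eqref{steady_functional_linear_quadratic_finite_dimension} coincide with those of the reduced triplet $(A_{11},B_1,C_1)$ started from the $NO(C,A)^\perp$-component of $x_0$. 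Finally (a fact implicit in Definition~\ref{Cstabilizability}, cf.\ \cite{BRC}) $(A,B)$ is $C$-stabilizable if and only if $(A_{11},B_1)$ is stabilizable; the easy implication lifts a stabilizing feedback $L_1$ of $(A_{11},B_1)$ to $L=(L_1,0)$ and uses that $A-BL$ stays block lower triangular, so $C\exp(t(A-BL))=\bigl(C_1\exp(t(A_{11}-B_1L_1))\ \ 0\bigr)$.

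On the reduced triplet $(A_{11},C_1)$ is observable, hence detectable, so classical turnpike theory applies (see \cite{porretta2013long,trelat2015turnpike}): if $(A_{11},B_1)$ is stabilizable, the Hamiltonian matrix $\left(\begin{smallmatrix}A_{11}&-B_1B_1^*\\ -C_1^*C_1&-A_{11}^*\end{smallmatrix}\right)$ has no eigenvalue on the imaginary axis, so the reduced optimality system written for $x_O^T-\overline x_O$, $p_O^T-\overline p_O$ splits into a part decaying from $t=0$ and an anti-stable one decaying backward from $t=T$, which gives $\|u^T(t)-\overline u\|+\|x_O^T(t)-\overline x_O\|\le K(e^{-\mu t}+e^{-\mu(T-t)})$. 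Conversely, if $(A_{11},B_1)$ is not stabilizable there are $\lambda$ with $\mathrm{Re}\,\lambda\ge0$ and $v\neq0$ with $A_{11}^*v=\lambda v$ and $B_1^*v=0$; pairing the $x_O$-equation of the reduced optimality system with $v$ and using $B_1^*v=0$ yields $\frac{d}{dt}\langle v,x_O^T\rangle=\lambda\langle v,x_O^T\rangle$, hence $\langle v,x_O^T(t)\rangle=e^{\lambda t}\langle v,x_O^T(0)\rangle$, which for $x_0$ with $|\langle v,x_{0,O}\rangle|$ large enough cannot converge to $\langle v,\overline x_O\rangle$; thus turnpike fails for $(u^T,x_O^T)$.

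To pass to $C$-turnpike, split $NO(C,A)=NO^{0+}(C,A)\oplus NO^{-}$ into the center-unstable and stable $A_{22}$-invariant spectral subspaces, and decompose $x^T=x_O^T+x_{0+}^T+x_{-}^T$ accordingly, so that $A_{22}$ restricted to $NO^{-}$ is Hurwitz and the $NO^{-}$-block of $\dot x=Ax+Bu$ is $\dot x_{-}=A_{22}^{-}x_{-}+A_{21}^{-}x_O+(B_2u)^{-}$. Since $NO(C,A)^\perp\subseteq W$ and $NO^{0+}(C,A)=\ker D$, one has $D(x^T-\overline x)=(x_O^T-\overline x_O)+D(x_{-}^T-\overline x_{-})$; in particular $D\overline x$ is well defined in spite of the non-uniqueness in Lemma~\ref{lemma_minimizer steady}, since $\ker A\cap\ker C\subseteq NO^{0+}(C,A)=\ker D$. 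Subtracting from the $NO^{-}$-equation its steady counterpart and invoking the variation of constants formula with $A_{22}^{-}$ Hurwitz, the bound of the previous paragraph on $\|u^T-\overline u\|$ and $\|x_O^T-\overline x_O\|$ propagates to $\|x_{-}^T(t)-\overline x_{-}\|\le K(e^{-\mu' t}+e^{-\mu(T-t)})$ for some $\mu'>0$; combining, this is the $C$-turnpike inequality whenever $(A,B)$ is $C$-stabilizable. For the converse, if $C$-turnpike holds then, since each $w\in NO(C,A)^\perp$ lies in $W$ with $Dw=w$, while $x^T-\overline x$ decomposes as $(x_O^T-\overline x_O)\in NO(C,A)^\perp$ plus a vector of $NO(C,A)$, one gets $\langle w,D(x^T-\overline x)\rangle=\langle w,x_O^T-\overline x_O\rangle$ and hence $\|x_O^T-\overline x_O\|\le\|D(x^T-\overline x)\|$, so $\|u^T-\overline u\|+\|x_O^T-\overline x_O\|\le K(e^{-\mu t}+e^{-\mu(T-t)})$; by the previous paragraph this forces $(A_{11},B_1)$ stabilizable, i.e.\ $(A,B)$ $C$-stabilizable.

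The reduction and the cascade estimate are essentially bookkeeping; the two delicate points are (i) the interplay of the projection $D$ with the Kalman coordinates — that $NO(C,A)^\perp\subseteq W$, that the $NO^{-}$-block of the free dynamics is genuinely Hurwitz, and that it is driven only by $(x_O,u)$ and not by $x_{0+}$ — and (ii) the precise equivalence, implicit in Definition~\ref{Cstabilizability}, between $C$-stabilizability of $(A,B)$ and stabilizability of the observable subpair $(A_{11},B_1)$, which is what links the hypothesis of the Proposition to the classical turnpike dichotomy used in the second paragraph.
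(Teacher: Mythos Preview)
Your argument is correct and takes a genuinely different route from the paper. The paper projects directly onto the \emph{detectable} subspace $W=NO^{0+}(C,A)^\perp$: setting $y=Dx$ and using the $A$-invariance of $NO^{0+}(C,A)$, it shows $\dot y=DAy+DBu$ is a closed equation with $Cx=Cy$, so the full problem collapses in one step to a problem on $W$ that is simultaneously detectable and (under $C$-stabilizability) stabilizable, and \cite[Corollary~3.2]{TGS} gives the $Dx^T$ estimate with no cascade. For necessity the paper argues by energy: $C$-turnpike with $z=0$ gives $J^T(u^T)\le K$ uniformly in $T$, a weak limit produces $u^\infty\in L^2(0,\infty)$ with $\int_0^\infty\|Cx^\infty\|^2<\infty$, and Lemma~\ref{lemma_Cstabeqdef} turns this into $C$-stabilizability. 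Your route instead works on the \emph{observable} subspace $NO(C,A)^\perp$ first, obtains turnpike there from the classical stabilizable/observable theory, and then propagates it through the stable unobservable block via variation of constants; for necessity you exhibit an explicit Hautus obstruction. Your approach is more hands-on and makes the role of the stable unobservable modes transparent, at the price of tracking two projections and two decay rates; the paper's is more economical but leans on the asserted detectability/stabilizability of the one-shot reduction on $W$.

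Two small points of polish: when you pair the $x_O$-equation with a complex eigenvector $v$ of $A_{11}^*$ the exponent should be $\bar\lambda$ rather than $\lambda$ (immaterial since $\mathrm{Re}\bar\lambda=\mathrm{Re}\lambda$); and the phrase ``for $|\langle v,x_{0,O}\rangle|$ large enough'' is slightly off, since $K$ in Definition~\ref{Cturnpike} may depend on $x_0$ --- what you actually use is that the estimate at $t=T/2$ forces $\langle v,x_O^T(T/2)\rangle\to\langle v,\overline x_O\rangle$ as $T\to\infty$, which fails for \emph{any} $x_0$ with $\langle v,x_{0,O}\rangle\neq\langle v,\overline x_O\rangle$ (case $\lambda=0$) or $\langle v,x_{0,O}\rangle\neq0$ (case $\mathrm{Re}\lambda\ge0$, $\lambda\neq0$).
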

The proof can be found in subsection \ref{subsec:1.4}.

We have the following Corollary.
\begin{corollary}\label{cor_ocp}
	Suppose
	\begin{equation}\label{Hypothesis_1}
	(A,B)\hspace{0.3 cm}\mbox{is stabilizable.}\tag{H}
	\end{equation}
	Then, for any observation matrix $C\in\mathcal{M}_{n\times n}(\mathbb{R})$, the triplet $(A,B,C)$ enjoys $C$-turnpike.
\end{corollary}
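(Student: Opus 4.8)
The plan is to derive Corollary \ref{cor_ocp} directly from Proposition \ref{prop_eqC}. Since Proposition \ref{prop_eqC} tells us that $(A,B,C)$ enjoys $C$-turnpike if and only if $(A,B)$ is $C$-stabilizable, it suffices to show that the standard stabilizability hypothesis \eqref{Hypothesis_1} implies $C$-stabilizability in the sense of Definition \ref{Cstabilizability}, for \emph{every} observation matrix $C$. This reduces the corollary to a one-line implication between two notions of stabilizability.

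First I would recall that $(A,B)$ stabilizable means there exists a feedback $L\in\mathcal{M}_{m\times n}(\mathbb{R})$ such that the closed-loop matrix $A-BL$ is Hurwitz, i.e. all its eigenvalues have strictly negative real part. A classical estimate on matrix exponentials then gives constants $\widetilde K,\mu>0$ with $\|\exp(t(A-BL))\|\le \widetilde K\exp(-\mu t)$ for all $t\ge 0$, where one may take any $\mu$ strictly less than the spectral abscissa gap of $A-BL$. Multiplying by $C$ and using submultiplicativity of the operator norm,
\begin{equation*}
\|C\exp(t(A-BL))\|\le \|C\|\,\|\exp(t(A-BL))\|\le \|C\|\,\widetilde K\exp(-\mu t),\qquad t\ge 0.
\end{equation*}
Setting $K\coloneqq \|C\|\,\widetilde K$ yields exactly the bound required in Definition \ref{Cstabilizability}, with the same feedback $L$. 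Hence $(A,B)$ is $C$-stabilizable for every $C\in\mathcal{M}_{n\times n}(\mathbb{R})$.

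With $C$-stabilizability established, Proposition \ref{prop_eqC} immediately gives that $(A,B,C)$ enjoys $C$-turnpike, which is the assertion of the corollary. There is essentially no obstacle here: the only mild point worth stating carefully is that the decay estimate for $\exp(t(A-BL))$ holds with a constant $\widetilde K$ that may exceed $1$ because $A-BL$ need not be normal (Jordan blocks produce polynomial-in-$t$ prefactors, which are absorbed by slightly decreasing $\mu$), but this is a textbook fact about Hurwitz matrices and requires no further argument. The corollary is therefore a direct specialization of Proposition \ref{prop_eqC}.
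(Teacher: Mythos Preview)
Your proof is correct and follows essentially the same approach as the paper: the paper's argument is simply that stabilizability of $(A,B)$ implies $C$-stabilizability for every $C$, and then Proposition~\ref{prop_eqC} finishes. You have merely spelled out the (standard) reason why the implication holds, via the Hurwitz estimate and submultiplicativity of the operator norm.
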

\begin{proof}[Proof of Corollary \ref{cor_ocp}]
	$(A,B)$ is stabilizable. Then, for any $C$, $(A,B)$ is $C$-stabilizable. Hence, Proposition \ref{prop_eqC} yields the conclusion.
\end{proof}

In subsection \ref{subsec:1.2} we apply our theory to the pointwise control of the heat equation and in subsection \ref{subsec:1.3} we illustrate how our theory works in the pointwise control of the wave equation. In subsection \ref{subsec:1.4}, we prove Proposition \ref{prop_eqC}.

\subsection{Pointwise control of the heat equation}
\label{subsec:1.2}
Let $\Omega$ be a connected bounded open set of $\mathbb{R}^n$, $n=1,2,3$, with $C^{\infty}$ boundary. Inspired by \cite[section 1.2]{doi:10.1137/1.9781611970982.ch1}, we consider a heat equation controlled from one point $x_{\mbox{\tiny{con}}}\in \Omega$
\begin{equation}\label{heat_point_1}
\begin{cases}
y_t-\Delta y +cy =v(t)\delta(x-x_{\mbox{\tiny{con}}})\hspace{0.6 cm} & \mbox{in} \hspace{0.10 cm}(0,T)\times \Omega\\
y=0  & \mbox{on}\hspace{0.10 cm} (0,T)\times \partial \Omega\\
y(0,x)=y_0(x),  & \mbox{in} \hspace{0.10 cm}\Omega\\
\end{cases}
\end{equation}
where $y=y(t,x)$ is the state, while $v=v(t)$ is the control and $\delta(x-x_{\mbox{\tiny{con}}})$ is the Dirac delta at $x_{\mbox{\tiny{con}}}$, i.e. the control acts on $x_{\mbox{\tiny{con}}}$. The potential coefficient $c$ is supposed to be bounded. Following \cite[subsection 1.2.2]{doi:10.1137/1.9781611970982.ch1}, one can prove that for any $y_0\in L^2(\Omega)$ and $v\in L^2(0,T)$, there exists a unique $y\in L^2((0,T)\times \Omega)$ solution by transposition to \eqref{heat_point_1}, with initial datum $y_0$ and control $v$.

We derive now a finite-dimensional Fourier approximation of the above controlled equation. We assume that the spectrum of $\mathcal{A}\coloneqq -\Delta+cI:H^1_0(\Omega)\longrightarrow H^{-1}(\Omega)$ is simple. Let $\left\{\lambda_k\right\}$ be the spectrum of $\mathcal{A}$ and let $\left\{\phi_k\right\}$ be a corresponding set of eigenfunctions, orthonormal basis of $L^2(\Omega)$.

Fix $N\in\mathbb{N}\setminus \left\{0\right\}$. The projection of \eqref{heat_point_1} on the finite dimensional space $\mbox{span} \left\{\phi_1,\dots,\phi_N\right\}$ reads as
\begin{equation}\label{linear_ocp_heat_N}
\begin{dcases}
\dot{x}=A_Nx+B_Nu\hspace{2.8 cm} & \mbox{in} \hspace{0.10 cm}(0,T)\\
x(0)=x_0,
\end{dcases}
\end{equation}
where $A_N$ is an $N\times N$ diagonal matrix
\begin{equation}\label{diagonal_N}
A_N=\begin{tikzpicture}[baseline=(current bounding box.center)]
\matrix (A) [matrix of math nodes,nodes in empty cells,right delimiter={]},left delimiter={[} ]{
	-\lambda_1  &   &             &    &    &    &    &             &    &    \\
	&    &             &    &    &    &    & \textbf{0}  &    &    \\
	&    &             &    &    &    &    &             &    &    \\
	&    &             &    &    &    &    &             &    &    \\
	&    &             &    &    &    &    &             &    &    \\
	&    &             &    &    &    &    &             &    &    \\
	&    &             &    &    &    &    &             &    &    \\
	&    &             &    &    &    &    &             &    &    \\
	&    & \textbf{0}  &    &    &    &    &             &    &  \\
	&    &             &    &    &    &    &             &    & -\lambda_N  \\
} ;
\draw[loosely dotted] (A-1-1)--(A-10-10);
\end{tikzpicture},
\end{equation}
the control operator $B_N$ is an $N\times 1$ matrix
\begin{equation}\label{control_oper_N}
B_N=
\begin{tikzpicture}[baseline=(current bounding box.center)]
\matrix (b) [matrix of math nodes,nodes in empty cells,right delimiter={]},left delimiter={[} ]{
	\phi_1(x_{\mbox{\tiny{con}}})  \\
	\phi_2(x_{\mbox{\tiny{con}}})  \\
	\\
	\\
	\\
	\\
	\\
	\\
	\\
	\phi_N(x_{\mbox{\tiny{con}}})  \\
} ;
\draw[loosely dotted] (b-3-1)-- (b-10-1);
\end{tikzpicture}
\end{equation}
and the initial datum
\begin{equation*}
x_0=\left[\int_{\Omega}y_0\phi_1dx,\dots,\int_{\Omega}y_0\phi_Ndx\right].
\end{equation*}

We consider the optimal control problem:
\begin{equation}\label{functional_ocp_ex_heat_N}
\min_{u\in L^2(0,T)}J^{T}(u)=\frac12 \int_{0}^T \left[|u(t)|^2+\|C_Nx(t)-z\|^2\right] dt,
\end{equation}
where $x$ is the state solution to \eqref{linear_ocp_heat_N}, with control $u$ and initial datum $x_0$, the scalar $z\in \mathbb{R}$ is a running target and
\begin{equation*}
C_N\coloneqq \left[\phi_1(x_{\mbox{\tiny{obs}}}),\dots, \phi_N(x_{\mbox{\tiny{obs}}})\right],
\end{equation*}
namely the state is observed on $x_{\mbox{\tiny{obs}}}\in\Omega$.

\begin{proposition}\label{prop_point_heat}
	The triplet $(A_N,B_N,C_N)$ enjoys $C_N$-turnpike if and only if for any $i\in \left\{1,\dots, N\right\}$ such that $\phi_i(x_{\mbox{\tiny{obs}}})\neq 0$, either $\phi_i(x_{\mbox{\tiny{con}}})\neq 0$ or $\lambda_i> 0$.
\end{proposition}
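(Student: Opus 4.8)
The plan is to derive the statement from Proposition \ref{prop_eqC}: since that Proposition asserts that $(A_N,B_N,C_N)$ enjoys $C_N$-turnpike exactly when $(A_N,B_N)$ is $C_N$-stabilizable, the whole task reduces to characterizing the $C_N$-stabilizability of $(A_N,B_N)$ in terms of the values $\phi_i(x_{\mbox{\tiny{con}}})$, $\phi_i(x_{\mbox{\tiny{obs}}})$ and of the signs of the $\lambda_i$.

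First I would use that $A_N$ is diagonal with the \emph{pairwise distinct} entries $-\lambda_1,\dots,-\lambda_N$ (the spectrum of $\mathcal{A}$ being simple by assumption). By the algebraic representation \eqref{alg_repr_unobservable space}, a vector $x$ belongs to $NO(C_N,A_N)$ if and only if $\sum_j \phi_j(x_{\mbox{\tiny{obs}}})(-\lambda_j)^i x_j=0$ for $i=0,\dots,N-1$; the coefficient matrix $\big((-\lambda_j)^i\big)_{i,j}$ being Vandermonde and therefore invertible, this forces $\phi_j(x_{\mbox{\tiny{obs}}})x_j=0$ for every $j$, so that
\begin{equation*}
NO(C_N,A_N)=\mathrm{span}\{e_j\ :\ \phi_j(x_{\mbox{\tiny{obs}}})=0\}.
\end{equation*}
The same Vandermonde computation applied to $A_N^iB_N$, $i=0,\dots,N-1$, identifies the reachable subspace of $(A_N,B_N)$ with $\mathrm{span}\{e_j\ :\ \phi_j(x_{\mbox{\tiny{con}}})\neq 0\}$. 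Since $NO(C_N,A_N)$ and its orthogonal complement $V\coloneqq\mathrm{span}\{e_j\ :\ \phi_j(x_{\mbox{\tiny{obs}}})\neq 0\}$ are coordinate subspaces, both are $A_N$-invariant, and $(A_N,B_N,C_N)$ splits block-diagonally into the observable part $(\widehat A,\widehat B,\widehat C)$ carried by $V$ — with $\widehat A=\mathrm{diag}(-\lambda_i)$ and $\widehat B=(\phi_i(x_{\mbox{\tiny{con}}}))_i$ indexed by $\{i:\phi_i(x_{\mbox{\tiny{obs}}})\neq 0\}$, and every entry of $\widehat C$ nonzero — and a part on which $C_N$ vanishes identically.

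Next I would prove that $(A_N,B_N)$ is $C_N$-stabilizable if and only if the observable pair $(\widehat A,\widehat B)$ is stabilizable. For sufficiency, pick $\widehat L$ with $\widehat A-\widehat B\widehat L$ Hurwitz and extend it by zero on $NO(C_N,A_N)$; with respect to the splitting $V\oplus NO(C_N,A_N)$ the closed-loop matrix is block lower triangular with diagonal blocks $\widehat A-\widehat B\widehat L$ and $A_N|_{NO(C_N,A_N)}$, hence $C_N\exp\!\big(t(A_N-B_NL)\big)=\big[\widehat C\exp\!\big(t(\widehat A-\widehat B\widehat L)\big),\,0\big]$ decays exponentially. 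For necessity, one uses that feedback does not alter the reachable subspace, so the uncontrollable spectral modes of $A_N$ on $V$ are left untouched by every $L$; if $(\widehat A,\widehat B)$ were not stabilizable, one such mode would carry an eigenvalue of nonnegative real part and, being observed (its $\widehat C$-component is nonzero), it would obstruct the exponential decay of $C_N\exp(t(A_N-B_NL))$, whatever $L$ is. This necessity direction is the delicate point of the argument, where one has to make precise that an observed, unreachable, non-asymptotically-stable mode genuinely prevents $C_N$-stabilization.

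It then remains to characterize the stabilizability of $(\widehat A,\widehat B)$. As this is a single-input pair with diagonal state matrix and pairwise distinct eigenvalues, the Hautus test boils down to requiring that, for every eigenvalue $-\lambda_i$ of $\widehat A$ (so with $\phi_i(x_{\mbox{\tiny{obs}}})\neq 0$) satisfying $-\lambda_i\geq 0$, i.e. $\lambda_i\leq 0$, the corresponding component $\phi_i(x_{\mbox{\tiny{con}}})$ of $\widehat B$ be nonzero. Equivalently: for every $i$ with $\phi_i(x_{\mbox{\tiny{obs}}})\neq 0$ one has $\phi_i(x_{\mbox{\tiny{con}}})\neq 0$ or $\lambda_i>0$, which is precisely the asserted condition. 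Putting the three steps together proves the Proposition.
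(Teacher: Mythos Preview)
Your approach is essentially the paper's: invoke Proposition~\ref{prop_eqC} to reduce to $C_N$-stabilizability, then exploit the diagonal structure of $A_N$ and the simplicity of the spectrum (Vandermonde) to identify the observable and reachable/stabilizable subspaces as coordinate subspaces, and read off the condition on the $\phi_i$'s. The paper packages the last step slightly more economically: rather than passing through the intermediate equivalence ``$(A_N,B_N)$ is $C_N$-stabilizable $\iff$ $(\widehat A,\widehat B)$ is stabilizable'' and then Hautus, it directly invokes the criterion ``$(A_N,B_N)$ is $C_N$-stabilizable $\iff$ $W_N\subseteq S(A_N,B_N)$'', where $W_N=NO(C_N,A_N)^\perp$ is the observable space and $S(A_N,B_N)=\sum_i\mathrm{Range}(A_N^iB_N)+\mathscr{L}^-(A_N)$ the stabilizable subspace, and then checks this inclusion coordinatewise. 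This sidesteps precisely the ``delicate'' necessity direction you flag, since the subspace inclusion is a purely algebraic condition once both spaces are computed; your hands-on argument that an observed, unreachable, non-stable mode obstructs $C_N$-decay under \emph{every} feedback is correct in outcome but, as you note, needs care (the right eigenvector of $A_N-B_NL$ at $-\lambda_i$ is not $e_i$ in general, so one must argue via $e_i$ being a left eigenvector, or via Lemma~\ref{lemma_Cstabeqdef} on the observable block).
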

\begin{proof}[Proof of Proposition \ref{prop_point_heat}]
	In this case, the observable subspace reads as
	\begin{equation}\label{def_W_N}
	W_N = NO(C_N,A_N)^{\perp}=\sum_{i=0}^{n-1}\mbox{Range}\left(\left(A_N^*\right)^iC_N^*\right)
	\end{equation}
	and the stabilizable subspace
	\begin{equation*}
	S(A_N,B_N)= \sum_{i=0}^{n-1}\mbox{Range}\left(A_N^iB_N\right)+\mathscr{L}^{-}(A_N).
	\end{equation*}
	$(A_N,B_N)$ is $C_N$-stabilizable if and only if
	\begin{equation*}
	W_N\subseteq S(A_N,B_N).
	\end{equation*}
	Now, for any natural $i\geq 0$, we have
	\begin{equation*}
	\left(A_N^*\right)^iC_N^*=
	\begin{tikzpicture}[baseline=(current bounding box.center)]
	\matrix (b) [matrix of math nodes,nodes in empty cells,right delimiter={]},left delimiter={[} ]{
		\lambda_1^i\phi_1(x_{\mbox{\tiny{obs}}})  \\
	    \\
		\\
		\\
		\\
		\\
		\\
		\\
		\\
		\lambda_N^i\phi_N(x_{\mbox{\tiny{obs}}})  \\
	} ;
	\draw[loosely dotted] (b-1-1)-- (b-10-1);
	\end{tikzpicture}
	\end{equation*}
	and
	\begin{equation*}
\left(A_N\right)^iB_N=
\begin{tikzpicture}[baseline=(current bounding box.center)]
\matrix (b) [matrix of math nodes,nodes in empty cells,right delimiter={]},left delimiter={[} ]{
	\lambda_1^i\phi_1(x_{\mbox{\tiny{con}}})  \\
	\\
	\\
	\\
	\\
	\\
	\\
	\\
	\\
	\lambda_N^i\phi_N(x_{\mbox{\tiny{con}}})  \\
} ;
\draw[loosely dotted] (b-1-1)-- (b-10-1);
\end{tikzpicture}.
\end{equation*}	
	Hence, since the spectrum of the Dirichlet laplacian is simple, the $(C_N,A_N)$ observable subspace is
	\begin{equation*}
	W_N=\left\{w\in\mathbb{R}^N \ | \ w_i=0, \ \mbox{if} \ \phi_i(x_{\mbox{\tiny{obs}}})= 0\right\}
	\end{equation*}
	and the $(A_N,B_N)$ stabilizable subspace reads as
	\begin{equation*}
	S(A_N,B_N)=\left\{w\in\mathbb{R}^N \ | \ w_i=0, \ \mbox{if} \ \phi_i(x_{\mbox{\tiny{con}}})=0 \ \mbox{and} \ \lambda_i\leq 0 \right\}.
	\end{equation*}
	Then, $W_N\subseteq S(A_N,B_N)$ if and only if
	\begin{equation*}
	\left\{i\in \left\{1,\dots,N\right\} \ | \ \phi_i(x_{\mbox{\tiny{obs}}})\neq 0\right\}\subset \left\{i\in \left\{1,\dots,N\right\} \ | \ \phi_i(x_{\mbox{\tiny{con}}})\neq 0 \ \mbox{and} \ \lambda_i> 0\right\}
	\end{equation*}
	
	Then, by Proposition \ref{prop_eqC}, the triplet $(A_N,B_N,C_N)$ enjoys $C_N$-turnpike if and only if for any $i\in \left\{1,\dots, N\right\}$ such that $\phi_i(x_{\mbox{\tiny{obs}}})\neq 0$, either $\phi_i(x_{\mbox{\tiny{con}}})\neq 0$ or $\lambda_i> 0$.
\end{proof}

We have performed some numerical simulations employing a conjugate gradient method (see \cite[algorithm 2 page 111]{ROP}). In our notation, we chose domain $\Omega=(0,10)$, $N=16$, potential coefficient $c\equiv -(2\pi/10)^2-1$, $x_{\mbox{\tiny{con}}} = L/3$, $x_{\mbox{\tiny{obs}}} = L/2$, target $z\equiv 1$ and initial datum $x_0=[1,\dots,1]$. The results are depicted in figures \ref{controlheat} and \ref{observedstateheat}.
\begin{figure}[htp]
	\begin{center}
		\includegraphics[width=12cm]{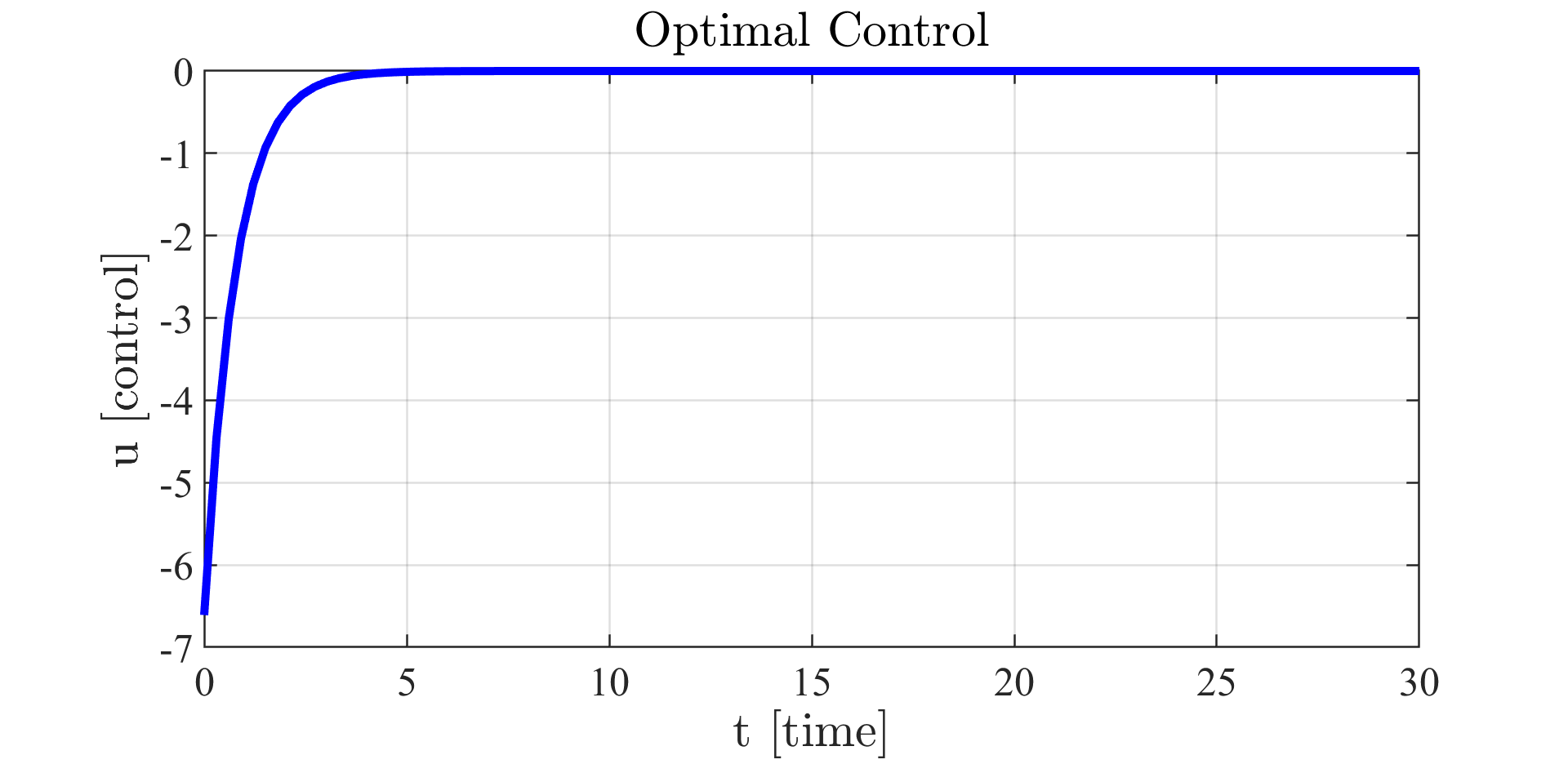}\\
		\caption{Optimal control for \eqref{functional_ocp_ex_heat_N} subject to \eqref{linear_ocp_heat_N}, with $\Omega=(0,10)$, $N=16$, $c\equiv -(2\pi/10)^2-1$, $x_{\mbox{\tiny{con}}} = L/3$, $x_{\mbox{\tiny{obs}}} = L/2$, target $z\equiv 1$ and initial datum $x_0=[1,\dots,1]$.}\label{controlheat}
	\end{center}
\end{figure}
	\begin{figure}[htp]
	\begin{center}
		\includegraphics[width=12cm]{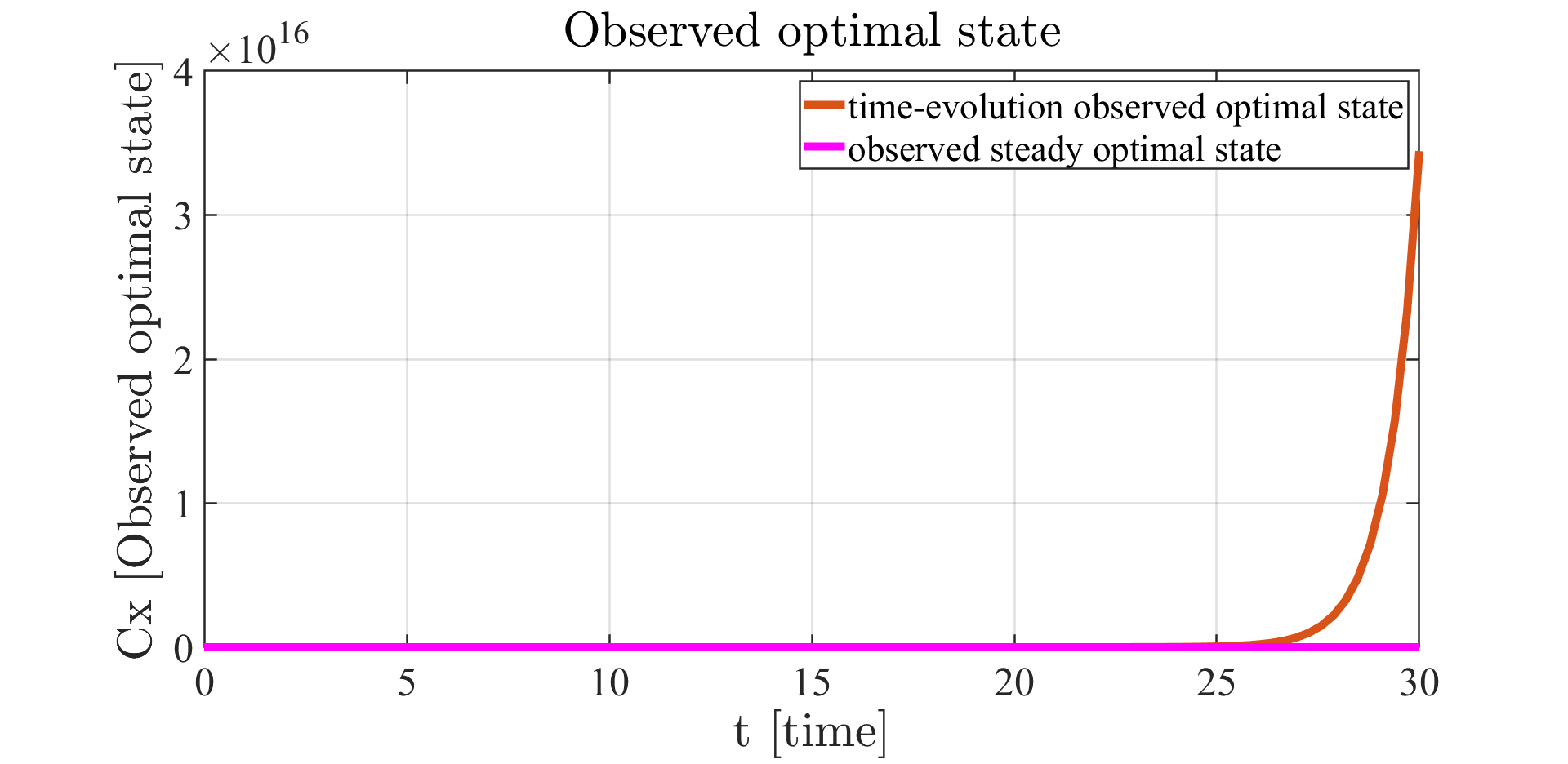}\\
		\caption{Observed optimal state for \eqref{functional_ocp_ex_heat_N} subject to \eqref{linear_ocp_heat_N}, with $\Omega=(0,10)$, $N=16$, $c\equiv -(2\pi/10)^2-1$, $x_{\mbox{\tiny{con}}} = L/3$, $x_{\mbox{\tiny{obs}}} = L/2$, target $z\equiv 1$ and initial datum $x_0=[1,\dots,1]$.}\label{observedstateheat}
	\end{center}
\end{figure}

\newpage

\subsection{Pointwise control of the wave equation}
\label{subsec:1.3}

Let $\Omega$ be a connected bounded open set of $\mathbb{R}^n$, $n =1,2,3$, with $C^{\infty}$ boundary.

As in \cite[section 1.3]{doi:10.1137/1.9781611970982.ch1}, we consider the wave equation controlled from $x_{\mbox{\tiny{con}}}\in\Omega$
\begin{equation}\label{wave_internal_1}
\begin{cases}
y_{tt}-\Delta y=v(t)\delta(x-x_{\mbox{\tiny{con}}})\hspace{0.6 cm} & \mbox{in} \hspace{0.10 cm}(0,T)\times \Omega\\
y=0  & \mbox{on}\hspace{0.10 cm} (0,T)\times \partial \Omega\\
y(0,x)=y^0_0(x), \ y_t(0,x)=y^1_0(x)  & \mbox{in} \hspace{0.10 cm}\Omega\\
\end{cases}
\end{equation}
where $y=y(t,x)$ is the state, while $v=v(t)$ is the control whose action is localized on the point $x_{\mbox{\tiny{con}}}$ by means of  multiplication with the Dirac delta $\delta(x-x_{\mbox{\tiny{con}}})$. The well posedeness of the above equation was analyzed in \cite[subsection 1.3.2]{doi:10.1137/1.9781611970982.ch1}, by using transposition techniques. For any $y_0\in H^1_0(\Omega)$, $y_1\in L^2(\Omega)$ and $v\in L^2(0,T)$, there exists a unique solution by transposition to \eqref{wave_internal_1}, with initial datum $(y_0,y_1)$ and control $v$.

We suppose that the spectrum $\left\{\lambda_k\right\}$ of the Dirichlet laplacian is simple. Let $\left\{\phi_k\right\}$ be an orthonormal basis of $L^2(\Omega)$, such that $-\Delta \phi_k=\lambda_k\phi_k$.

Fix $N\in\mathbb{N}\setminus \left\{0\right\}$. Set
\begin{equation}\label{diagonal_N_2}
D_N\coloneqq \begin{tikzpicture}[baseline=(current bounding box.center)]
\matrix (A) [matrix of math nodes,nodes in empty cells,right delimiter={]},left delimiter={[} ]{
	\lambda_1  &   &             &    &    &    &    &             &    &    \\
	&    &             &    &    &    &    & \textbf{0}  &    &    \\
	&    &             &    &    &    &    &             &    &    \\
	&    &             &    &    &    &    &             &    &    \\
	&    &             &    &    &    &    &             &    &    \\
	&    &             &    &    &    &    &             &    &    \\
	&    &             &    &    &    &    &             &    &    \\
	&    &             &    &    &    &    &             &    &    \\
	&    & \textbf{0}  &    &    &    &    &             &    &  \\
	&    &             &    &    &    &    &             &    & \lambda_N  \\
} ;
\draw[loosely dotted] (A-1-1)--(A-10-10);
\end{tikzpicture}.
\end{equation}
The projection of \eqref{wave_internal_1} onto the finite dimensional space $\mbox{span} \left\{\phi_1,\dots,\phi_N\right\}$ reads as
\begin{equation}\label{linear_ocp_wave_N}
\begin{dcases}
\dot{x}=A_Nx+B_Nu\hspace{2.8 cm} & \mbox{in} \hspace{0.10 cm}(0,T)\\
x(0)=x_0,
\end{dcases}
\end{equation}
where $A_N$ is an $2N\times 2N$ matrix
\begin{equation}\label{wave_A_N}
A=\begin{pmatrix}
0&I_N\\
-D_N&0\\
\end{pmatrix},
\end{equation}
the control operator $B_N$ is an $2N\times 1$ matrix
\begin{equation}\label{control_oper_N_2}
B_N=
\begin{tikzpicture}[baseline=(current bounding box.center)]
\matrix (b) [matrix of math nodes,nodes in empty cells,right delimiter={]},left delimiter={[} ]{
	0  \\
	\\
	\\
	\\
	\\
	\\
	\\
	\\
	\\
	0  \\
	\phi_1(x_{\mbox{\tiny{con}}})  \\
	\phi_2(x_{\mbox{\tiny{con}}})  \\
	\\
	\\
	\\
	\\
	\\
	\\
	\\
	\phi_N(x_{\mbox{\tiny{con}}})  \\
} ;
\draw[loosely dotted] (b-1-1)-- (b-10-1);
\draw[loosely dotted] (b-13-1)-- (b-20-1);
\end{tikzpicture}
\end{equation}
and the initial datum
\begin{equation*}
x_0=\left[\int_{\Omega}y_0\phi_1dx,\dots,\int_{\Omega}y_0\phi_Ndx;\int_{\Omega}y_1\phi_1dx,\dots,\int_{\Omega}y_1\phi_Ndx\right].
\end{equation*}

We consider the optimal control problem:
\begin{equation}\label{functional_ocp_ex_wave_N}
\min_{u\in L^2(0,T)}J^{T}(u)=\frac12 \int_{0}^T \left[|u(t)|^2+\|C_Nx(t)-z\|^2\right] dt,
\end{equation}
where $x$ is the state solution to \eqref{linear_ocp_wave_N}, with control $u$ and initial datum $x_0$, the scalar $z$ is a running target and
\begin{equation*}
C_N\coloneqq \left[\phi_1(x_{\mbox{\tiny{obs}}}),\dots, \phi_N(x_{\mbox{\tiny{obs}}});0,\dots,0\right],
\end{equation*}
namely the state is observed on $x_{\mbox{\tiny{obs}}}\in\Omega$.

We have the following result.
\begin{proposition}\label{prop_point_wave}
	The triplet $(A_N,B_N,C_N)$ enjoys $C_N$-turnpike if and only if for any $i\in \left\{1,\dots, N\right\}$ such that $\phi_i(x_{\mbox{\tiny{obs}}})\neq 0$, we have $\phi_i(x_{\mbox{\tiny{con}}})\neq 0$.
\end{proposition}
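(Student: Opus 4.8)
\emph{Proof proposal.} The plan is to follow the proof of Proposition~\ref{prop_point_heat} almost verbatim: compute explicitly the observable subspace $W_N=NO(C_N,A_N)^{\perp}$ and the stabilizable subspace $S(A_N,B_N)$ for the wave data \eqref{wave_A_N}--\eqref{control_oper_N_2}, show that $C_N$-stabilizability is equivalent to the claimed vanishing condition, and then invoke Proposition~\ref{prop_eqC}. Recall that $(A_N,B_N)$ is $C_N$-stabilizable if and only if $W_N\subseteq S(A_N,B_N)$.

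The first observation is that the spectrum of $A_N$ in \eqref{wave_A_N} is $\{\pm i\sqrt{\lambda_k}\}_{k=1}^N$, which is \emph{purely imaginary} because the Dirichlet eigenvalues $\lambda_k$ are strictly positive. Hence $\mathscr{L}^{-}(A_N)=\{0\}$, so $S(A_N,B_N)=\sum_{i=0}^{2N-1}\mathrm{Range}\!\left(A_N^iB_N\right)$ is exactly the controllable subspace of $(A_N,B_N)$. This is the structural reason why, in contrast to the heat case, no ``$\lambda_i>0$'' alternative appears in the statement: there are no stable free modes available to absorb an observable mode that fails to be controllable.

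Next I would exploit the block structure. Writing $B_N$ in the block form $\big(0,\,b\big)^{\top}$ with $b=\big(\phi_1(x_{\mbox{\tiny{con}}}),\dots,\phi_N(x_{\mbox{\tiny{con}}})\big)^{\top}$, a one-line induction using \eqref{wave_A_N} gives $A_N^{2k}B_N=\big(0,\,(-D_N)^k b\big)^{\top}$ and $A_N^{2k+1}B_N=\big((-D_N)^k b,\,0\big)^{\top}$, so $S(A_N,B_N)=V_{\mbox{\tiny{con}}}\times V_{\mbox{\tiny{con}}}$ with $V_{\mbox{\tiny{con}}}=\mathrm{span}\{D_N^k b : k\ge 0\}$. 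Since $D_N$ is diagonal with pairwise distinct entries (simplicity of the Dirichlet spectrum), a Vandermonde argument --- the one already implicit in the heat case --- identifies $V_{\mbox{\tiny{con}}}=\{w\in\mathbb{R}^N : w_i=0\ \text{if}\ \phi_i(x_{\mbox{\tiny{con}}})=0\}$. Running the same computation with $A_N^{*}=\left[\begin{smallmatrix}0&-D_N\\ I_N&0\end{smallmatrix}\right]$ and $C_N^{*}=\big(c,\,0\big)^{\top}$, $c=\big(\phi_1(x_{\mbox{\tiny{obs}}}),\dots,\phi_N(x_{\mbox{\tiny{obs}}})\big)^{\top}$, yields $W_N=V_{\mbox{\tiny{obs}}}\times V_{\mbox{\tiny{obs}}}$ with $V_{\mbox{\tiny{obs}}}=\{w\in\mathbb{R}^N : w_i=0\ \text{if}\ \phi_i(x_{\mbox{\tiny{obs}}})=0\}$.

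Finally, $W_N\subseteq S(A_N,B_N)$ is equivalent to $V_{\mbox{\tiny{obs}}}\subseteq V_{\mbox{\tiny{con}}}$, i.e. to the inclusion $\{i : \phi_i(x_{\mbox{\tiny{obs}}})\neq 0\}\subseteq\{i : \phi_i(x_{\mbox{\tiny{con}}})\neq 0\}$, which is exactly the stated condition; Proposition~\ref{prop_eqC} then gives the equivalence with $C_N$-turnpike. I do not expect a genuine obstacle: the only delicate points are the bookkeeping in the block-matrix induction and the use of the simple spectrum to pin down $V_{\mbox{\tiny{con}}}$ and $V_{\mbox{\tiny{obs}}}$ as coordinate subspaces.
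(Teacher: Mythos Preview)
Your proposal is correct and follows essentially the same route as the paper's proof: explicit computation of $(A_N)^k B_N$ and $(A_N^*)^k C_N^*$ via the block structure, identification of $W_N$ and $S(A_N,B_N)$ as products of coordinate subspaces using simplicity of the Dirichlet spectrum, and reduction to Proposition~\ref{prop_eqC}. Your explicit remark that $\mathscr{L}^{-}(A_N)=\{0\}$ because the spectrum of $A_N$ is purely imaginary is a welcome clarification---the paper writes $\mathscr{L}^{-}(A_N)$ in the definition of $S(A_N,B_N)$ but silently drops it in the final description---and it pinpoints exactly why the wave statement lacks the ``or $\lambda_i>0$'' escape clause of the heat case.
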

\begin{proof}[Proof of Proposition \ref{prop_point_wave}]
	As in the proof of Proposition \ref{prop_point_heat}, we define the observable subspace
	\begin{equation}\label{def_W_N_wave}
	W_N \coloneqq NO(C_N,A_N)^{\perp}=\sum_{i=0}^{n-1}\mbox{Range}\left(\left(A_N^*\right)^iC_N^*\right)
	\end{equation}
	and the stabilizable subspace
	\begin{equation*}
	S(A_N,B_N)= \sum_{i=0}^{n-1}\mbox{Range}\left(A_N^iB_N\right)+\mathscr{L}^{-}(A_N),
	\end{equation*}
	where in this case $A_N$ and $B_N$ are given respectively by \eqref{wave_A_N} and \eqref{control_oper_N_2}.
	The $C_N$-stabilizability of $(A_N,B_N)$ is equivalent to the inclusion
	\begin{equation*}
	W_N\subseteq S(A_N,B_N).
	\end{equation*}
	On the one hand, for any natural $i\geq 0$, we have
	\begin{equation*}
	\left(A_N^*\right)^{2i}C_N^*=
	\begin{tikzpicture}[baseline=(current bounding box.center)]
\matrix (b) [matrix of math nodes,nodes in empty cells,right delimiter={]},left delimiter={[} ]{
	(-\lambda_1)^i\phi_1(x_{\mbox{\tiny{con}}})  \\
	\\
	\\
	\\
	\\
	\\
	\\
	\\
	\\
	(-\lambda_N)^i\phi_N(x_{\mbox{\tiny{con}}})  \\
	0  \\
	\\
	\\
	\\
	\\
	\\
	\\
	\\
	\\
	0  \\
} ;
\draw[loosely dotted] (b-1-1)-- (b-10-1);
\draw[loosely dotted] (b-11-1)-- (b-20-1);
\end{tikzpicture}.
\end{equation*}
	and
	\begin{equation*}
	\left(A_N^*\right)^{2i+1}C_N^*=
\begin{tikzpicture}[baseline=(current bounding box.center)]
\matrix (b) [matrix of math nodes,nodes in empty cells,right delimiter={]},left delimiter={[} ]{
	0  \\
	\\
	\\
	\\
	\\
	\\
	\\
	\\
	\\
	0  \\
	(-\lambda_1)^i\phi_1(x_{\mbox{\tiny{con}}})  \\
	\\
	\\
	\\
	\\
	\\
	\\
	\\
	\\
	(-\lambda_N)^i\phi_N(x_{\mbox{\tiny{con}}})  \\
} ;
\draw[loosely dotted] (b-1-1)-- (b-10-1);
\draw[loosely dotted] (b-11-1)-- (b-20-1);
\end{tikzpicture}
	\end{equation*}
	On the other hand, for any natural $i\geq 0$, we have
	\begin{equation*}
\left(A_N\right)^{2i}B_N=
\begin{tikzpicture}[baseline=(current bounding box.center)]
\matrix (b) [matrix of math nodes,nodes in empty cells,right delimiter={]},left delimiter={[} ]{
		0  \\
	\\
	\\
	\\
	\\
	\\
	\\
	\\
	\\
	0  \\
	(-\lambda_1)^i\phi_1(x_{\mbox{\tiny{con}}})  \\
	\\
	\\
	\\
	\\
	\\
	\\
	\\
	\\
	(-\lambda_N)^i\phi_N(x_{\mbox{\tiny{con}}})  \\
} ;
\draw[loosely dotted] (b-1-1)-- (b-10-1);
\draw[loosely dotted] (b-11-1)-- (b-20-1);
\end{tikzpicture}
\end{equation*}
	and
	\begin{equation*}
	\left(A_N\right)^{2i+1}B_N=
	\begin{tikzpicture}[baseline=(current bounding box.center)]
	\matrix (b) [matrix of math nodes,nodes in empty cells,right delimiter={]},left delimiter={[} ]{
		(-\lambda_1)^i\phi_1(x_{\mbox{\tiny{con}}})  \\
		\\
		\\
		\\
		\\
		\\
		\\
		\\
		\\
		(-\lambda_N)^i\phi_N(x_{\mbox{\tiny{con}}})  \\
		0  \\
		\\
		\\
		\\
		\\
		\\
		\\
		\\
		\\
		0  \\
	} ;
	\draw[loosely dotted] (b-1-1)-- (b-10-1);
	\draw[loosely dotted] (b-11-1)-- (b-20-1);
	\end{tikzpicture}.
	\end{equation*}
	Hence, since the spectrum of the Dirichlet laplacian is simple, the $(C_N,A_N)$ observable subspace is
	\begin{equation*}
	W_N=\left\{w\in\mathbb{R}^{2N} \ | \ w_i=0 \ \mbox{and} \ w_{i+1}=0 \ \mbox{if} \ \phi_i(x_{\mbox{\tiny{obs}}})=0 \right\}
	\end{equation*}
	and the $(A_N,B_N)$ stabilizable subspace reads as
	\begin{equation*}
	S(A_N,B_N)=\left\{w\in\mathbb{R}^{2N} \ | \ w_i=0 \ \mbox{and} \ w_{i+1}=0 \ \mbox{if} \ \phi_i(x_{\mbox{\tiny{con}}})=0 \right\}.
	\end{equation*}
	Then, the inclusion $W_N\subseteq S(A_N,B_N)$ holds if and only if
	\begin{equation*}
	\left\{i\in \left\{1,\dots,N\right\} \ | \ \phi_i(x_{\mbox{\tiny{obs}}})\neq 0\right\}\subset \left\{i\in \left\{1,\dots,N\right\} \ | \ \phi_i(x_{\mbox{\tiny{con}}})\neq 0 \right\}
	\end{equation*}
	
	Then, by Proposition \ref{prop_eqC}, the triplet $(A_N,B_N,C_N)$ enjoys $C_N$-turnpike if and only if for any $i\in \left\{1,\dots, N\right\}$ such that $\phi_i(x_{\mbox{\tiny{obs}}})\neq 0$, $\phi_i(x_{\mbox{\tiny{con}}})\neq 0$.
\end{proof}

We carried out some numerical simulations using the interior-point optimization routine \verb!IPOpt! (see \cite{IDO} and \cite{waechter2009introduction}) coupled with \verb!AMPL! \cite{FAP}, which serves as modelling language and performs the automatic differentiation. In our notation, we chose domain $\Omega=(0,10)$, $N=16$, $x_{\mbox{\tiny{con}}} = L/2$, $x_{\mbox{\tiny{obs}}} = L/2$, target $z\equiv 1$ and initial datum $x_0=[1,\dots,1]$. The results are illustrated in figures \ref{controlwave} and \ref{observedstatewave}.
\begin{figure}[htp]
	\begin{center}
		\includegraphics[width=12cm]{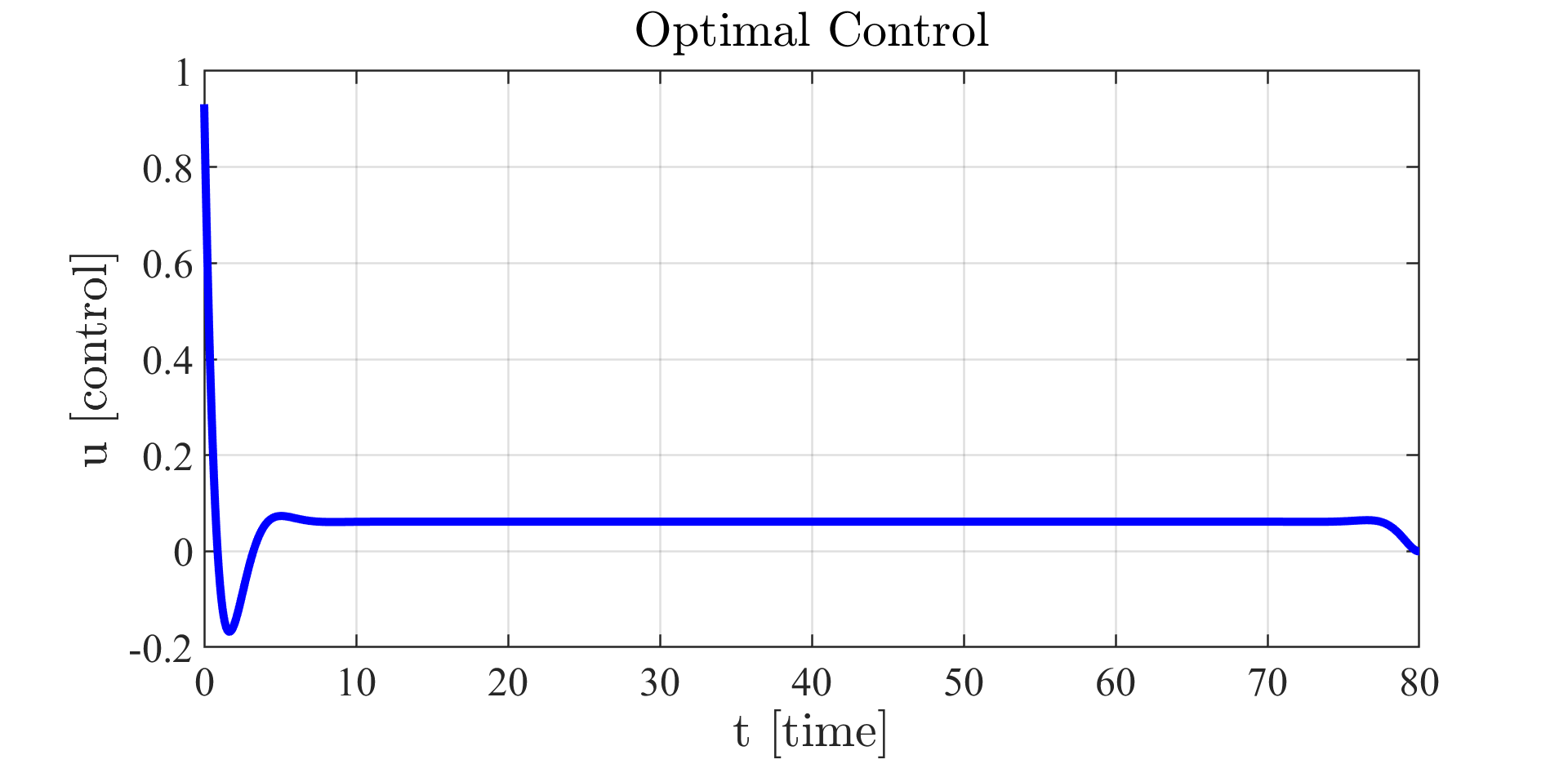}\\
		\caption{Optimal control for \eqref{linear_ocp_wave_N}-\eqref{functional_ocp_ex_wave_N}, with $\Omega=(0,10)$, $N=16$, $x_{\mbox{\tiny{con}}} = L/2$, $x_{\mbox{\tiny{obs}}} = L/2$, $z\equiv 1$ and $x_{\mbox{\tiny{con}}}=[1,\dots,1]$.}\label{controlwave}
	\end{center}
\end{figure}
\begin{figure}[htp]
	\begin{center}
		\includegraphics[width=12cm]{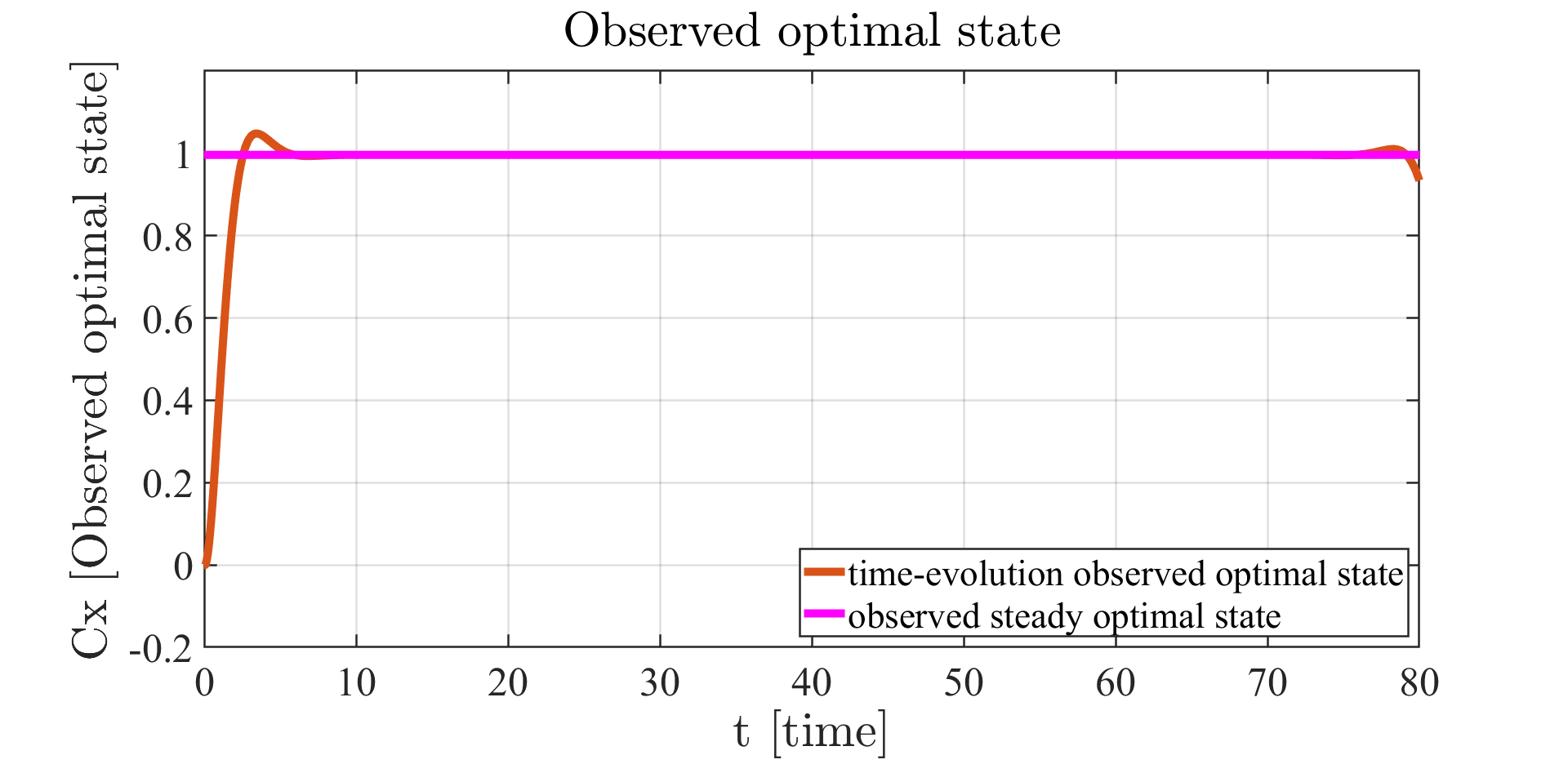}\\
		\caption{Observed optimal state for \eqref{linear_ocp_wave_N}-\eqref{functional_ocp_ex_wave_N}, with $\Omega=(0,10)$, $N=16$, $x_{\mbox{\tiny{con}}} = L/2$, $x_{\mbox{\tiny{obs}}} = L/2$, $z\equiv 1$ and $x_{\mbox{\tiny{con}}}=[1,\dots,1]$.}\label{observedstatewave}
	\end{center}
\end{figure}

\newpage

\subsection{Proof of Proposition \ref{prop_eqC}}
\label{subsec:1.4}

The key-tool for our analysis of the control problem \eqref{linear_ocp}-\eqref{functional_ocp} is the Kalman decomposition (see, e.g. \cite[section 3.3]{zhou1996robust}). In the notation of \cite{callier1995convergence}, $\mathscr{L}^{0,+}(A)$ denote the $A$-invariant subspace of $\mathbb{R}^n$ spanned by the generalized eigenvectors of $A$ corresponding to eigenvalues $\lambda$ of $A$ such that $\mbox{Re}(\lambda)\geq 0$. We decompose the state space into an detectable part and an undetectable one
\begin{equation*}
\mathbb{R}^n=W\oplus NO^{0+}(C,A),
\end{equation*}
where
\begin{equation}\label{def_NO}
NO^{0+}(C,A) = \bigcap_{i=0}^{n-1}\ker\left(CA^i\right)\bigcap \mathscr{L}^{0,+}(A)
\end{equation}
and
\begin{equation}\label{def_W}
W \coloneqq NO^{0+}(C,A)^{\perp}.
\end{equation}
The matrix associated to the orthogonal projection onto $W$ is denoted by $D$, while the matrix associated to the projection onto $NO^{0+}(C,A)$ is indicated by $R$. Hence, for any $x\in \mathbb{R}^n$,
\begin{equation*}
	x=Dx+Rx.
\end{equation*}
where $Dx\in W$ and $Rx\in NO^{0+}(C,A)$. By definition \eqref{def_NO}, we realize that
\begin{equation}\label{obs_new_var}
	Cx=CDx+CRx=CDx.	
\end{equation}
Moreover, by \eqref{def_NO}, we have $ANO^{0+}(C,A)\subseteq NO^{0+}(C,A)$. Then, $ARx\in NO^{0+}(C,A)$, whence
\begin{equation}\label{proj_AR}
	RARx=ARx \hspace{0.3 cm}\mbox{and} \hspace{0.3 cm} DARx=0.	
\end{equation}

In the notation of \cite{callier1995convergence}, $\mathscr{L}^{-}(A)$ denote the $A$-invariant subspace of $\mathbb{R}^n$ spanned by the generalized eigenvectors of $A$ corresponding to eigenvalues $\lambda$ of $A$ such that $\mbox{Re}(\lambda)<0$ and the stabilizable subspace
\begin{equation*}
	S(A,B)\coloneqq \sum_{i=0}^{n-1}\mbox{Range}\left(A^iB\right)+\mathscr{L}^{-}(A).
\end{equation*}

An essential tool for the proof of Proposition \ref{prop_eqC} is the following Lemma.
\begin{lemma}\label{lemma_Cstabeqdef}
	Let $A\in\mathcal{M}_{n\times n}(\mathbb{R})$, $B\in\mathcal{M}_{n\times m}(\mathbb{R})$ and $C\in\mathcal{M}_{n\times n}(\mathbb{R})$. $(A,B)$ is $C$-stabilizable if and only if for any initial datum $x_0\in \mathbb{R}^n$, there exists a control $u\in L^2(0,+\infty;\mathbb{R}^m)$, such that
	\begin{equation}\label{Cintfinite}
	\int_0^{\infty}\|Cx\|^2dt<+\infty,
	\end{equation}
	$x$ being the solution to \eqref{linear_ocp}, with initial datum $x_0$ and control $u$.
\end{lemma}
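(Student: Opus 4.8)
The plan is to prove the two implications separately: linear--quadratic/Riccati theory handles the direction ``the $L^2$ condition $\Rightarrow$ $C$-stabilizability'', and a Kalman-decomposition construction handles the converse.

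First I would assume that for every $x_0$ there is $u\in L^2(0,\infty;\mathbb{R}^m)$ with $\int_0^\infty\|Cx\|^2\,dt<\infty$, and deduce that the infinite-horizon cost $V(x_0):=\inf\{\int_0^\infty(\|Cx\|^2+\|u\|^2)\,dt:u\in L^2(0,\infty;\mathbb{R}^m)\}$ is finite for every $x_0$ (the hypothesised control has both terms finite). Then, by standard LQ theory, the finite-horizon value equals $x_0^*P_Tx_0$ with $P_T\ge 0$ solving the finite-horizon Riccati equation with vanishing terminal data, $T\mapsto P_T$ is nondecreasing, and since $x_0^*P_Tx_0\le V(x_0)<\infty$ for all $x_0$ the family $\{P_T\}$ is bounded; hence $P_T$ converges monotonically to some $P\ge 0$ solving the algebraic Riccati equation $A^*P+PA-PBB^*P+C^*C=0$. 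Setting $A_P:=A-BB^*P$, the ARE yields the Lyapunov identity $\frac{d}{dt}(x^*Px)=-\|Cx\|^2-\|B^*Px\|^2$ along $\dot x=A_Px$, so integrating gives $\int_0^\infty\|Ce^{tA_P}x_0\|^2\,dt\le x_0^*Px_0<\infty$ for every $x_0$, i.e. $t\mapsto Ce^{tA_P}$ lies in $L^2(0,\infty;\mathcal{M}_{n\times n}(\mathbb{R}))$. Since the entries of $e^{tA_P}$ are real linear combinations of the linearly independent functions $t^je^{t\operatorname{Re}\lambda_k}\cos(t\operatorname{Im}\lambda_k)$ and $t^je^{t\operatorname{Re}\lambda_k}\sin(t\operatorname{Im}\lambda_k)$ (with $\lambda_k$ the eigenvalues of $A_P$), membership in $L^2(0,\infty)$ forces every contribution with $\operatorname{Re}\lambda_k\ge 0$ to vanish, whence $\|Ce^{tA_P}\|\le Ke^{-\mu t}$ for suitable $K,\mu>0$; thus $L:=B^*P$ witnesses $C$-stabilizability.

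Conversely, fix a feedback $L$ with $\|Ce^{t(A-BL)}\|\le Ke^{-\mu t}$. The state feedback $u=-Lx$ gives $x(t)=e^{t(A-BL)}x_0$ and $\int_0^\infty\|Cx\|^2\,dt\le K^2\|x_0\|^2/(2\mu)<\infty$, so the only point to repair is that $u=-Le^{t(A-BL)}x_0$ need not be in $L^2$, because $A-BL$ may retain modes with nonnegative real part. Those modes are, however, invisible to $C$: on $\mathscr{L}^{0,+}(A-BL)$ the function $e^{t(A-BL)}$ is a non-decaying exponential--polynomial, so $Ce^{t(A-BL)}\to 0$ forces it to vanish identically there, i.e. $\mathscr{L}^{0,+}(A-BL)\subseteq NO(C,A-BL)\subseteq\ker C$. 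Accordingly I would use the decomposition $\mathbb{R}^n=W\oplus NO^{0+}(C,A)$ with projections $D,R$: by \eqref{obs_new_var}--\eqref{proj_AR} one has $Cx=CDx$ and $Dx$ satisfies the autonomous subsystem $\dot w=(DA|_W)w+(DB)u$; one then shows that on this subsystem $C$-stabilizability upgrades to genuine stabilizability of $(DA|_W,DB)$. Choosing $\widehat L$ on $W$ with $DA|_W-DB\widehat L$ Hurwitz and applying $u=-\widehat L\,Dx$ to the full system makes $Dx$ (hence $u=-\widehat L\,Dx$) decay exponentially and keeps $Cx=CDx$ exponentially small, so $\int_0^\infty\|Cx\|^2\,dt<\infty$, the uncontrolled $NO^{0+}(C,A)$-component being irrelevant since $C$ annihilates it.

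The main obstacle is precisely the reduction step in the second implication: passing from $C$-stabilizability of $(A,B)$ to stabilizability of the reduced pair on the detectable subspace. The subtlety is that $W$ is only the orthogonal complement of the $A$-invariant space $NO^{0+}(C,A)$ and is not itself $A$-invariant, so the clean way is to pass to the quotient $\mathbb{R}^n/NO^{0+}(C,A)$ --- on which both the dynamics and the output descend --- and run a Hautus-type argument there (equivalently, to identify $C$-stabilizability with the inclusion of the detectable subspace in $S(A,B)$, in the form used later for the heat and wave examples). A secondary, routine but not vacuous point is, in the first implication, securing a nonnegative solution of the algebraic Riccati equation without any detectability hypothesis on $(A,C)$: this is exactly what the monotone-limit construction together with the elementary ``$L^2$ exponential--polynomial $\Rightarrow$ exponential decay'' fact accomplish.
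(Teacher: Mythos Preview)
Your proof is correct but far more explicit than the paper's, which dispatches the lemma in one line by citing \cite[Remark~2.2]{BRC} applied to the detectable subspace $W$. There is thus little to compare at the level of technique: the paper defers the entire argument to the literature, while you supply a self-contained proof. Your forward implication via the monotone Riccati limit and the ``$L^2$ exponential--polynomial $\Rightarrow$ exponential decay'' observation is the standard route and is sound; your converse via the Kalman decomposition $\mathbb{R}^n=W\oplus NO^{0+}(C,A)$ is precisely the structure the paper itself sets up in \eqref{def_NO}--\eqref{proj_AR}, and your identification of the reduction step (from $C$-stabilizability of $(A,B)$ to genuine stabilizability of $(DA|_W,DB)$ on $W$) as the crux is accurate --- this is exactly what the cited remark in \cite{BRC} is meant to supply. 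The brief aside about $\mathscr{L}^{0,+}(A-BL)$ is not strictly needed once you commit to working on $W$ via \eqref{proj_AR}, but it does no harm.
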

The above Lemma follows from \cite[Remark 2.2 page 24]{BRC} applied to the detectable space $W$, introduced in \eqref{def_W}.
%

We are now ready to prove Proposition \ref{prop_eqC}.

\begin{proof}[Proof of Proposition \ref{prop_eqC}.]
	\textit{Step 1} \  \textbf{Necessity of the $C$-stabilizability of $(A,B)$}\\
	Suppose $(A,B,C)$ enjoys $C$-turnpike. Then, taking target $z=0$, for any initial datum $x_0\in\mathbb{R}^n$, we have
	\begin{equation*}
	\|u^T(t)\|+\|Cx^T(t)\|\leq K\left[\exp\left(-\mu t\right)+\exp\left(-\mu\left(T-t\right)\right)\right],
	\end{equation*}
	$(u^T,x^T)$ being the optimal pair for \eqref{linear_ocp}-\eqref{functional_ocp}, with target $z=0$ and initial datum $x_0$, whence
	\begin{equation}\label{bound_functional}
	J^{T}(u^T) \leq K,
	\end{equation}
	where $K$ is independent of the time horizon $T$. By Banach-Alaoglu Theorem, there exists $u^{\infty}\in L^2((0,+\infty);\mathbb{R}^n)$, such that, up to subsequences,
	\begin{equation*}
		u^T\underset{T\to +\infty}{\longrightarrow}u^{\infty},
	\end{equation*}
	weakly in $L^2((0,+\infty);\mathbb{R}^m)$. We denote by $x^{\infty}$ the solution to \eqref{linear_ocp}, with initial datum $x_0$ and control $u^{\infty}$. Arbitrarily fix $S>0$. By definition of weak convergence, up to subsequences
		\begin{equation*}
	x^T\underset{T\to +\infty}{\longrightarrow}x^{\infty},
	\end{equation*}
	weakly in $L^2((0,S);\mathbb{R}^m)$. By lower-semicontinuity of the norm with respect to the weak convergence and \eqref{bound_functional}, for any $S>0$, we have
	\begin{equation*}
	\int_{0}^{S}\|Cx^{\infty}\|^2dt\leq \liminf_{T\to +\infty}\frac12\int_{0}^{S}\left[\|u^T(t)\|^2+\|Cx^T(t)\|^2\right]dt\leq \liminf_{T\to +\infty}J^T(u^T)\leq K,
	\end{equation*}
	whence, by the arbitrariness of $S$,
	\begin{equation*}
\int_{0}^{\infty}\|Cx^{\infty}\|^2dt\leq K<+\infty.
\end{equation*}	
	Then, by Lemma \ref{lemma_Cstabeqdef}, $(A,B)$ is $C$-stabilizable.\\
	\textit{Step 2} \  \textbf{We rewrite the functional employing Kalman decomposition}\\
	Assume $(A,B)$ is $C$-stabilizable. Take any initial datum $x_0\in \mathbb{R}^n$ and control $u\in L^2(0,T;\mathbb{R}^m)$. Let $x$ be the corresponding solution to the state equation
	\begin{equation}\label{linear_ocp_0.1}
	\begin{dcases}
	\dot{x}=Ax+Bu\hspace{2.8 cm} & \mbox{in} \hspace{0.10 cm}(0,T)\\
	x(0)=x_0.
	\end{dcases}
	\end{equation}
	Set $y\coloneqq Dx$. By \eqref{proj_AR},
	\begin{equation*}
	\dot{y}=\frac{d}{dt}\left[Dx\right]=DAx+DBu=DADx+DARx+DBu=DADx+DBu=DAy+DBu,
	\end{equation*}
	whence
	\begin{equation}\label{linear_ocp_0.6}
	\begin{dcases}
	\dot{y}=DAy+DBu\hspace{2.8 cm} & \mbox{in} \hspace{0.10 cm}(0,T)\\
	y(0)=Dx_0.
	\end{dcases}
	\end{equation}
	Furthermore, by \eqref{proj_AR},
	\begin{equation*}
	Cx=CDx+CRx=C{y}.
	\end{equation*}

	Then, the functional $J^T$ introduced in \eqref{linear_ocp}-\eqref{functional_ocp} can be rewritten as
	\begin{equation}\label{functional_ocp_transformed}
	J^{T}(u)=\frac12 \int_{0}^T \left[\|u(t)\|^2+\|C{y}(t)-z\|^2\right] dt,
	\end{equation}
	where:
	\begin{equation}\label{linear_state_equation_transformed}
	\begin{dcases}
	\dot{y}=DAy+DBu\hspace{2.8 cm} & \mbox{in} \hspace{0.10 cm}(0,T)\\
	y(0)=Dx_0.
	\end{dcases}
	\end{equation}
	\textit{Step 3} \ \textbf{Sufficiency of $C$-stabilizability of $(A,B)$}\\
	Let $u^T$ be the optimal control and $y^T$ be the optimal state for the optimal control problem \eqref{linear_state_equation_transformed}-\eqref{functional_ocp_transformed}.
	
	By construction, $\left(DA,C\right)$ is detectable on $W$ and, since $(A,B)$ is $C$-stabilizable, 
	$\left(DA,DB\right)$ is stabilizable on $W$. Then, by \cite[Corollary 3.2]{TGS} applied to \eqref{linear_state_equation_transformed}-\eqref{functional_ocp_transformed}, we have
	\begin{equation*}
	\|u^T(t)-\overline{u}\|+\left\|Dx^T(t)-D{\overline{x}}\right\|\leq K\left[\exp\left(-\mu t \right)+\exp\left(-\mu\left(T-t\right)\right)\right],
	\end{equation*}
	$K$ and $\mu >0$ being independent of the time horizon. Then, the triplet $(A,B,C)$ enjoys $C$-turnpike, as desired.
\end{proof}

\section{Fixed endpoint problem}
\label{sec:2}

In this section, we consider an optimal control problem for \eqref{linear_ocp} with arbitrarily prescribed terminal state. Let an initial datum $x_0\in\mathbb{R}^n$ and a final target $x_1\in\mathbb{R}^n$ be given.

In the notation of section \ref{sec:1}, assume $(A,B)$ is controllable. Consider the control system
\begin{equation}\label{linear_ocp_2}
\begin{dcases}
\dot{x}=Ax+Bu\hspace{2.8 cm} & \mbox{in} \hspace{0.10 cm}(0,T)\\
x(0)=x_0, \ x(T)=x_1.
\end{dcases}
\end{equation}

We introduce the set of admissible controls
\begin{equation}\label{ocp_both_endpoints_fixed}
\mathscr{U}_{\mbox{\tiny{ad}}}\coloneqq \left\{u\in L^2(0,T;\mathbb{R}^m) \ | \ \mbox{there exists a solution}\hspace{0.3 cm} x \hspace{0.3 cm} \mbox{to \eqref{linear_ocp_2}}\right\}.
\end{equation}

We consider the linear quadratic optimal control problem:
\begin{equation}\label{functional_ocp_2}
\min_{u\in \mathscr{U}_{\mbox{\tiny{ad}}}}J^{T}(u)=\frac12 \int_{0}^T \left[\|u(t)\|^2+\|Cx(t)-z\|^2\right] dt,
\end{equation}
where $z\in \mathbb{R}^n$ is a running target. By the Direct Methods in the Calculus of Variations and strict convexity, the above problem admits a unique optimal control denoted by $u^T$. The optimal state is denoted by $x^T$. Furthermore, by strict convexity, the optimal control $u^T=-B^*p^T$, where $(x^T,p^T)$ is the unique solution to the optimality system

\begin{equation}\label{OS_controllability}
\begin{dcases}
\dot{x}^T(t)=Ax^T(t)-BB^*p^T(t)\hspace{1 cm}& t\in (0,T)\\
-\dot{p}^T(t)=A^*p^T(t)+C^*\left(Cx^T(t)-z\right)&t\in (0,T)\\
x^T(0)=x_0\\
x^T(T)=x_1.
\end{dcases}
\end{equation}

As in the free endpoint problem, the steady problem is a minimization problem in finite dimension under linear constraints
The corresponding steady problem reads as
\begin{equation}\label{steady_functional_linear_quadratic_fixed_endpoint}
\min_{(x,u)}J_s(x,u)=\frac12 \|u\|^2+\frac{1}{2}\|Cx-z\|,\hspace{0.6 cm}\mbox{with the constraint} \hspace{0.6 cm}0=Ax+Bu.
\end{equation}
This problem has been analyzed in section \ref{sec:1}.

As we anticipated, the fixed endpoint case is more delicate. Indeed,
\begin{itemize}
	\item in the functional, we penalize only the observable part of the state;
	\item in the definition of the set of admissible control, we impose a final condition $x^T(T)=x_1$, which involves the full state, including the unobservable part.
\end{itemize}
We cannot expect the classical turnpike property to be valid, without any additional assumptions on $(A,C)$. Indeed, take $A$ skew adjoint and $C=0$. The resulting optimal control is oscillatory, thus violating the turnpike property.

The turnpike property is verified by the full state and adjoint state if $(A,C)$ fulfills the \textit{weak Hautus test}, introduced in definition \eqref{weak_Hautus test} below. In subsection \ref{subsec:2.2}, inspired by \cite{faulwasser2019towards}, we study velocity turnpike in case the Hamiltonian matrix has imaginary eigenvalues.

\subsection{The sufficiency of the weak Hautus test for exponential turnpike}
\label{subsec:2.1}

\begin{definition}\label{weak_Hautus test}
	Take $(A,C)$ as in \eqref{linear_ocp_2}-\eqref{functional_ocp_2}. The pair $(A,C)$ is said to satisfy the \textit{weak Hautus test} if
	\begin{equation}\label{weakHtest}
		\mbox{rank}\begin{bmatrix}
		A-i\beta I\\
		C
		\end{bmatrix}=n,\hspace{0.68 cm}\forall \ i\beta\in \mbox{sp}\left(A\right),
	\end{equation}
	where $\mbox{sp}\left(A\right)$ denotes the spectrum of $A$, $i$ stands for the imaginary unit and $\beta\in\mathbb{R}$.
\end{definition}

Note that the difference with respect to the classical Hautus test (see, e.g. \cite[Proposition 1.5.1]{OCO}) is that the above rank condition has to be checked only for purely imaginary eigenvalues $i\beta$. Namely, only eigenvectors of imaginary eigenvalues of $A$ are required to be observable.

\begin{proposition}\label{prop_controllability_case}
	Suppose $(A,B)$ is controllable and the weak Hautus test \eqref{weakHtest} is satisfied. Take $T>2$. Let $u^T$ be an optimal control for \eqref{linear_ocp_2}-\eqref{functional_ocp_2} and $x^T$ be the optimal state. There exist $T$-independent $K$ and $\mu >0$ such that
	\begin{equation}\label{ocp_both_endpoints_fixed_6}
		\|u^T(t)-\overline{u}\|+\|x^T(t)-\overline{x}\|\leq K\left[\exp\left(-\mu t \right)+\exp\left(-\mu\left(T-t\right)\right)\right],
	\end{equation}
	where $\left(\overline{u},\overline{x}\right)$ is the unique solution to the steady problem \eqref{steady_functional_linear_quadratic_fixed_endpoint}.
\end{proposition}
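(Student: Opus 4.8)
The strategy is to analyze the optimality system \eqref{OS_controllability} directly, viewing it as a boundary-value problem for the Hamiltonian flow, and to show that the Hamiltonian matrix
\[
\mathcal{H}\coloneqq\begin{pmatrix} A & -BB^*\\ -C^*C & -A^*\end{pmatrix}
\]
is \emph{hyperbolic}, i.e. has no eigenvalue on the imaginary axis. Once hyperbolicity is established, the classical dichotomy/stable-manifold argument for linear-quadratic turnpike (as in \cite{porretta2013long,trelat2015turnpike,TGS}) applies essentially verbatim: one writes $(x^T,p^T)=(\overline{x},\overline{p})+(\delta x,\delta p)$, where $(\overline{x},\overline{p})$ solves the steady optimality system associated to \eqref{steady_functional_linear_quadratic_fixed_endpoint}, the perturbation $(\delta x,\delta p)$ satisfies the homogeneous system $\dot{w}=\mathcal{H}w$ with split boundary conditions ($\delta x(0)$ prescribed, $\delta x(T)$ prescribed), and hyperbolicity forces $\delta x(t),\delta p(t)$ to decay like $\exp(-\mu t)+\exp(-\mu(T-t))$, uniformly in $T$ provided $T$ is bounded below (hence the hypothesis $T>2$). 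The estimate on $u^T-\overline{u}=-B^*(p^T-\overline{p})=-B^*\delta p$ then follows immediately.

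The heart of the matter — and the only place where the weak Hautus test enters — is the verification that $\mathcal{H}$ has no purely imaginary eigenvalue. Suppose $\mathcal{H}\binom{\xi}{\eta}=i\omega\binom{\xi}{\eta}$ with $\omega\in\mathbb{R}$ and $(\xi,\eta)\neq 0$. The usual symplectic computation: pair the first block with $\eta^*$ and the second with $\xi^*$, take real parts, and use that $i\omega$ is imaginary to get $\|B^*\eta\|^2+\|C\xi\|^2=0$, so $B^*\eta=0$ and $C\xi=0$. Feeding these back into the eigenvalue equations gives $A\xi=i\omega\xi$ and $A^*\eta=-i\omega\eta$, i.e. $\xi$ is an eigenvector of $A$ for the imaginary eigenvalue $i\omega$ with $C\xi=0$, and $\eta$ is an eigenvector of $A^*$ for $-i\omega$ with $B^*\eta=0$. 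Now two sub-cases. If $\xi\neq 0$, then $i\omega\in\operatorname{sp}(A)$ and $\xi$ violates the weak Hautus test \eqref{weakHtest} — contradiction. If $\xi=0$, then $\eta\neq 0$, $B^*\eta=0$ and $A^*\eta=-i\omega\eta$; by the Hautus controllability test this contradicts controllability of $(A,B)$ (here $i\omega$ need not lie in $\operatorname{sp}(A)$ as an issue, since $-i\omega\in\operatorname{sp}(A^*)$ already gives the needed eigenpair, and $(A,B)$ controllable $\Leftrightarrow$ no left-eigenvector of $A$ is killed by $B^*$). Either way $\mathcal{H}$ is hyperbolic.

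The remaining pieces are routine but should be stated. First, well-posedness and uniqueness of the steady problem \eqref{steady_functional_linear_quadratic_fixed_endpoint}: under controllability of $(A,B)$ the constraint set $M$ is all of $\{(u,x):0=Ax+Bu\}$ and, since any $x$ is reachable, $\ker A\cap\ker C$-type degeneracies from Lemma \ref{lemma_minimizer steady} still leave the minimizer of $J_s$ well defined; in fact with controllability one checks $\overline{x},\overline{u}$ are \emph{unique} because the steady adjoint relation $A^*\overline{p}+C^*(C\overline{x}-z)=0$ together with $B^*\overline{p}=0$ and controllability pins down $\overline{p}=0$ on the relevant subspace, and then $\overline{u},\overline{x}$ follow. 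Second, the uniform-in-$T$ solvability of the split boundary-value problem: hyperbolicity gives a projector onto the stable subspace, and for $T$ large the map $(\delta x(0),\delta x(T))\mapsto(\delta x(0),\delta p(0))$ is invertible with norm bounded uniformly in $T$; the condition $T>2$ is a convenient explicit threshold ensuring invertibility and absorbing the exponentially small cross terms. I expect the main obstacle to be purely bookkeeping: making the dichotomy estimate genuinely uniform in $T$ (controlling the inverse of the boundary operator as $T\to\infty$) and correctly identifying $\overline{p}$ so that the steady state really is the turnpike — the spectral argument itself is short. One can alternatively cite \cite[Corollary 3.2]{TGS} or \cite{trelat2015turnpike} for the uniform dichotomy estimate once hyperbolicity of $\mathcal H$ is in hand, which shortens the write-up considerably.
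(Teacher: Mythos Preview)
Your proposal is correct and lands on the same skeleton as the paper --- hyperbolicity of $\mathcal{H}$, uniqueness of the steady pair, then a dichotomy estimate on the perturbation variables --- but the execution differs in two places worth flagging.

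\textbf{Hyperbolicity.} Your symplectic computation (pairing the eigenvalue equations, extracting $\|B^*\eta\|^2+\|C\xi\|^2=0$, then invoking the weak Hautus test on $\xi$ and the controllability Hautus test on $\eta$) is direct and self-contained. The paper instead proves this as Lemma~\ref{lemma_Ham_spectrum} via the Riccati route: it conjugates $\mbox{Ham}$ by $\Lambda=\begin{pmatrix}I&0\\-\widehat E&I\end{pmatrix}$ to block-triangular form $\begin{pmatrix}A-BB^*\widehat E&-BB^*\\0&-(A-BB^*\widehat E)^*\end{pmatrix}$, then shows $\mathscr{L}^0(A-BB^*\widehat E)=NO^0(C,A)$ and reduces the weak Hautus test to $NO^0(C,A)=\{0\}$. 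Your argument is more elementary; the paper's buys the Riccati decomposition, which it reuses heavily in the velocity-turnpike result (Proposition~\ref{prop_controllability_case_velocity}).

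\textbf{Uniform-in-$T$ estimate.} You treat the uniform invertibility of the boundary operator $(\delta x(0),\delta x(T))\mapsto(\delta x(0),\delta p(0))$ as bookkeeping and suggest citing \cite{TGS} or \cite{trelat2015turnpike}. Be careful: those references, as invoked in the paper for the free-endpoint case, do not directly cover the fixed-endpoint boundary conditions here. The paper handles this via a separate Lemma~\ref{lemma_infnorm_bound}, which bounds $\|p^T(t)\|$ uniformly in $T$ by (i) building an admissible control that is zero on $[1,T-1]$ to bound $J^T(u^T)$ independently of $T$, and (ii) using observability of $(A^*,B^*)$ (from controllability of $(A,B)$) to recover $p^T$ from $B^*p^T$ and $C^*Cx^T$. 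Combined with the elementary dichotomy Lemma~\ref{lemma_1}, this closes the argument. Your boundary-operator approach would also work, but the uniform bound is not quite free: you would still need to show that the projections of $E^\pm$ onto the $x$-coordinates are isomorphisms (this is where controllability enters again), which is essentially the Riccati-graph structure the paper is already using.
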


The proof of the sufficiency of the weak Hautus test \eqref{weakHtest} (available at the end of the present section) is based on Lemma \ref{lemma_Ham_spectrum}, Lemma \ref{lemma_infnorm_bound} and Lemma \ref{lemma_1}, concerning the properties of the optimality system \eqref{OS_controllability} and its associated matrix, the so-called Hamiltonian matrix
\begin{equation}\label{Ham}
\mbox{Ham}\coloneqq \begin{bmatrix}
A&-BB^*\\
-C^*C&-A^*
\end{bmatrix}.
\end{equation}
Lemma \ref{lemma_Ham_spectrum} is well-known in the literature (see e.g. \cite[Lemma 8]{kucera1972contribution}). However, we provide the proof for the reader's convenience.

\begin{lemma}\label{lemma_Ham_spectrum}
	Consider the Hamiltonian matrix $\mbox{Ham}$ introduced in \eqref{Ham}. Assume $(A,B)$ is stabilizable. We have $\mathscr{L}^{0}(H)=\left\{0\right\}$ if and only if the weak Hautus test is satisfied.
\end{lemma}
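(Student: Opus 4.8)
The statement is: assuming $(A,B)$ stabilizable, $\mathscr{L}^0(\mathrm{Ham})=\{0\}$ if and only if the weak Hautus test holds, i.e. $\operatorname{rank}\begin{bmatrix} A-i\beta I\\ C\end{bmatrix}=n$ for every $i\beta\in\mathrm{sp}(A)$. My approach is to argue by contraposition in both directions, using the explicit structure of eigenvectors of $\mathrm{Ham}$ associated to purely imaginary eigenvalues. I first record the key symmetry: $\mathrm{Ham}$ is Hamiltonian (i.e. $J\,\mathrm{Ham}$ is symmetric with $J=\begin{bmatrix}0&I\\-I&0\end{bmatrix}$), so its spectrum is symmetric with respect to the imaginary axis; consequently the generalized eigenspace $\mathscr{L}^0(\mathrm{Ham})$ for eigenvalues on the imaginary axis is itself a symplectic-type invariant subspace, and — crucially — since $(A,B)$ is stabilizable one can show $\mathscr{L}^0(\mathrm{Ham})$ is actually spanned by genuine eigenvectors (no nontrivial Jordan blocks on the imaginary axis). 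I would prove this last point, or simply work directly with an eigenvector $v=(x,p)^\top\neq 0$ of $\mathrm{Ham}$ with eigenvalue $i\omega$, $\omega\in\mathbb{R}$.

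Suppose such $v=(x,p)^\top\neq 0$ with $\mathrm{Ham}\,v=i\omega v$ exists. Writing out the two block rows gives
\begin{equation*}
Ax-BB^*p=i\omega x,\qquad -C^*Cx-A^*p=i\omega p.
\end{equation*}
The standard energy computation is to take the Hermitian inner product of the first equation with $p$ and of the second with $x$, add the conjugate relations, and observe that the $i\omega$ terms cancel because $i\omega$ is purely imaginary; this yields $\|B^*p\|^2+\|Cx\|^2=0$, hence $B^*p=0$ and $Cx=0$. Feeding $B^*p=0$ back into the equations, $p$ satisfies $A^*p=-i\omega p$, so either $p=0$ or $-i\omega\in\mathrm{sp}(A^*)$ with $B^*p=0$ — but stabilizability of $(A,B)$ is exactly the Hautus condition $\operatorname{rank}[A-\lambda I\mid B]=n$ for $\operatorname{Re}\lambda\ge 0$, equivalently $\{p: A^*p=\lambda p,\ B^*p=0\}=\{0\}$ for $\operatorname{Re}\lambda\ge 0$; since $-i\omega$ has real part $0\ge 0$, we conclude $p=0$. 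Then the first equation reduces to $Ax=i\omega x$ with $x\neq 0$, so $i\omega\in\mathrm{sp}(A)$, and together with $Cx=0$ this says $\begin{bmatrix}A-i\omega I\\ C\end{bmatrix}x=0$ with $x\neq0$, i.e. the weak Hautus test fails at $i\omega$. This proves: weak Hautus test $\Rightarrow$ no purely imaginary eigenvalue of $\mathrm{Ham}$, and (using the no-Jordan-block claim) $\Rightarrow \mathscr{L}^0(\mathrm{Ham})=\{0\}$.

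Conversely, if the weak Hautus test fails, there is $i\beta\in\mathrm{sp}(A)$ and $x\neq 0$ with $Ax=i\beta x$ and $Cx=0$. Then $v=(x,0)^\top$ satisfies $\mathrm{Ham}\,v=(Ax,\,-C^*Cx)^\top=(i\beta x,0)^\top=i\beta v$, so $i\beta\in\mathrm{sp}(\mathrm{Ham})$ and $\mathscr{L}^0(\mathrm{Ham})\neq\{0\}$. This direction is immediate and needs no stabilizability hypothesis.

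The main obstacle is the passage from ``$\mathrm{Ham}$ has no purely imaginary eigenvalue'' to ``$\mathscr{L}^0(\mathrm{Ham})=\{0\}$'': a priori $\mathscr{L}^0$ could be nonzero through a nilpotent-on-the-axis block even when all eigenvalues on the axis are absent, but of course $\mathscr{L}^0$ is by definition the sum of generalized eigenspaces for eigenvalues \emph{on} the imaginary axis, so it is nonzero precisely when $\mathrm{Ham}$ \emph{has} a purely imaginary eigenvalue; hence this step is in fact tautological and the energy argument above is the whole content. (The Jordan-block remark is therefore not needed for the equivalence as stated; I would drop it to keep the proof clean.) I should be careful to state the stabilizability Hautus criterion in the adjoint form $\{p:A^*p=\lambda p,\ B^*p=0\}=\{0\}$ for $\operatorname{Re}\lambda\ge 0$ and to note that $\omega$ real forces $\operatorname{Re}(-i\omega)=0$, which is the borderline case covered by stabilizability.
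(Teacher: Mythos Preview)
Your proof is correct and complete, but it follows a genuinely different route from the paper's. The paper argues via the algebraic Riccati equation: since $(A,B)$ is stabilizable, the ARE admits an antistrong solution $\widehat{E}$, and the similarity $\Lambda=\begin{bmatrix}I&0\\-\widehat{E}&I\end{bmatrix}$ block-triangularizes $\mathrm{Ham}$ with diagonal blocks $A_+:=A-BB^*\widehat{E}$ and $-A_+^*$. The imaginary spectrum of $\mathrm{Ham}$ therefore coincides with that of $A_+$, and a cited structural fact (Callier--Winkin) identifies $\mathscr{L}^0(A_+)$ with the unobservable critical subspace $NO^0(C,A)$; the remainder of the paper's argument is the easy equivalence between $NO^0(C,A)=\{0\}$ and the weak Hautus test.

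Your approach bypasses the Riccati machinery entirely: you work directly with a putative eigenvector $(x,p)$ of $\mathrm{Ham}$ for $i\omega$, and the energy identity (first block paired with $p$, conjugate of second block paired with $x$, then added) forces $B^*p=0$ and $Cx=0$; stabilizability in Hautus form kills $p$, and what remains is exactly a violation of the weak Hautus test. This is more elementary and self-contained---no ARE, no external citation---and is the classical textbook route to hyperbolicity of the Hamiltonian. The paper's route, on the other hand, sets up the $\widehat{E}$-change of variables that is reused later (in the velocity-turnpike analysis of Proposition~\ref{prop_controllability_case_velocity}), so within the paper its extra overhead is amortized. Your observation that the passage from ``no imaginary eigenvalue'' to ``$\mathscr{L}^0=\{0\}$'' is tautological is correct; the Jordan-block digression in your plan can indeed be dropped.
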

\begin{proof}[Proof of Lemma \ref{lemma_Ham_spectrum}]
	Since $(A,B)$ is stabilizable, the Algebraic Riccati Equation
	\begin{equation}\label{ARE}
	\widehat{E}A+A^*\widehat{E}-\widehat{E}BB^*\widehat{E}+C^*C=0 \hspace{1 cm} \mbox{(ARE)}
	\end{equation}
	admits a unique antistrong solution $\widehat{E}$, a symmetric and positive semidefinite matrix, such that $A_+\coloneqq A-BB^*\widehat{E}$ has all eigenvalues with nonpositive real parts (see e.g. \cite{kuvcera1991algebraic} and references therein). As in \cite{roth1950matric} and \cite[formula (3.8) page 57]{kuvcera1991algebraic}, set
	\begin{equation*}
	\Lambda \coloneqq 		\begin{bmatrix}
	I_n&0\\
	-\widehat{E}&I_n.
	\end{bmatrix}
	\end{equation*}
	Using \eqref{ARE}, we have\footnote{
		\begin{equation*}
		\Lambda^{-1}=\begin{bmatrix}
		I_n&0\\
		\widehat{E}&I_n.
		\end{bmatrix}
		\end{equation*}
	}
	\begin{equation*}
	\Lambda \hspace{0.03 cm} \mbox{Ham} \hspace{0.03 cm} \Lambda^{-1} = \begin{bmatrix}
	A-BB^*\widehat{E}&-BB^*\\
	0&-\left(A-BB^*\widehat{E}\right)^*.
	\end{bmatrix}
	\end{equation*}
	This, together with \cite[Fact 1-(f)]{callier1995convergence}, yields the equivalence between $\mathscr{L}^{0}(H)=\left\{0\right\}$ and $NO^0(C,A)=\left\{0\right\}$, with
	\begin{equation*}
	NO^0(C,A)\coloneqq\bigcap_{i=0}^{n-1}\ker\left(CA^i\right)\cap \mathscr{L}^{0}(A).
	\end{equation*}
	To conclude, we have to prove that $NO^0(C,A)=\left\{0\right\}$ if and only if $(A,C)$ satisfies the weak Hautus test \eqref{weakHtest}. On the one hand, if $NO^0(C,A)=\left\{0\right\}$, then for any eigenvector $v$ corresponding to imaginary eigenvalue $i\beta$ of $A$ and for any $k\in \mathbb{N}\cup \left\{0\right\}$
	\begin{equation}\label{CAkeigen}
	CA^kv=\left(i\beta\right)^kCv.
	\end{equation}
	Since $NO^0(C,A)=\left\{0\right\}$, we have $Cv\neq0$, whence
	\begin{equation*}
	\mbox{rank}\begin{bmatrix}
	A-i\beta I\\
	C
	\end{bmatrix}=n,
	\end{equation*}
	as required. On the other hand, suppose \eqref{weakHtest} is verified. Suppose, by contradiction, that $NO^0(C,A)\supsetneq\left\{0\right\}$. Since $NO^0(C,A)$ is $A$-invariant,
	there exists a nonzero eigenvector $v\in NO^0(C,A)$ corresponding to an imaginary eigenvalue $i\beta$. By \eqref{CAkeigen}, this leads to $Cv=0$, which yields
	\begin{equation*}
	\mbox{rank}\begin{bmatrix}
	A-i\beta I\\
	C
	\end{bmatrix}<n,
	\end{equation*}
	so obtaining a contradiction, as desired.
\end{proof}

We now provide a global bound of the norm of the adjoint state, uniform in the time horizon $T>2$.

\begin{lemma}\label{lemma_infnorm_bound}
	Consider the control problem \eqref{linear_ocp_2}-\eqref{functional_ocp_2}. There exists $K=K(A,B,C)$ such that, for any time horizon $T>2$ and time instant $t\in [0,T]$, we have
	\begin{equation}
	\left\|p^T(t)\right\|\leq K\left[\left\|x_0\right\|+\left\|x_1\right\|+\left\|z\right\|\right]
	\end{equation}
\end{lemma}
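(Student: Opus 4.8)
The goal is a bound on $\|p^T(t)\|$ that is uniform in the horizon $T > 2$ and affine in the data $\|x_0\| + \|x_1\| + \|z\|$. The natural strategy is to split the adjoint state into the part driven by the running target $z$ and the part coming from the boundary data, exploiting the linearity of the optimality system \eqref{OS_controllability}. First I would note that the map $(x_0, x_1, z) \mapsto (x^T, p^T)$ is linear, so it suffices to bound $p^T$ separately in the two cases (i) $z$ arbitrary, $x_0 = x_1 = 0$, and (ii) $z = 0$, $(x_0,x_1)$ arbitrary, and add the estimates. For case (i), the steady state $\overline{x}$ (a fixed vector depending linearly on $z$, with $\|\overline{x}\| + \|\overline{u}\| \le C\|z\|$ by Lemma \ref{lemma_minimizer steady}) is itself admissible on a unit-length transition at each end, so by comparing $J^T(u^T)$ with the cost of the control that steers $x_0 = 0$ to $\overline{x}$ in time $1$, stays at $\overline{x}$, then returns to $x_1 = 0$ in the last unit of time, one gets $J^T(u^T) \le C(\|z\|^2 + 1)$ — actually $\le C\|z\|^2$ after homogenizing — uniformly in $T$. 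This gives an $L^2$-in-time bound on $u^T = -B^* p^T$, hence (using controllability of $(A,B)$ and the observability Gramian being invertible on $[0,1]$ and on $[T-1,T]$) a pointwise bound on $p^T(t)$ for $t$ near $0$ and near $T$; propagating inward is the delicate point.

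The cleanest way to get the genuinely pointwise, interior bound is to use the Riccati change of variables already introduced in the proof of Lemma \ref{lemma_Ham_spectrum}. Writing $q^T \coloneqq p^T - \widehat{E} x^T$, the conjugation $\Lambda \,\mathrm{Ham}\, \Lambda^{-1}$ block-triangularizes the flow: $x^T$ solves $\dot{x}^T = A_+ x^T - BB^* q^T$ with $A_+ = A - BB^*\widehat{E}$ having eigenvalues in the closed left half-plane, and $q^T$ solves the backward equation $-\dot q^T = -A_+^* q^T + C^* C \,\overline{x}$-type forcing (after subtracting the steady state). One should then do the analogous change on the "anti-stable" side, i.e. also use the unique stabilizing solution of the dual Riccati equation, to obtain a full hyperbolic splitting: the component of $(x^T - \overline{x}, p^T - \overline{p})$ in the stable subspace of $\mathrm{Ham}$ decays forward from data at $t = 0$, and the component in the anti-stable subspace decays backward from data at $t = T$. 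Crucially, under the weak Hautus test and Lemma \ref{lemma_Ham_spectrum} there is \emph{no} center subspace ($\mathscr{L}^0(\mathrm{Ham}) = \{0\}$), so $\mathbb{R}^{2n}$ splits as stable $\oplus$ anti-stable with a spectral gap $\mu > 0$. The boundary conditions $x^T(0) = x_0$, $x^T(T) = x_1$ determine the two "amplitudes," and a standard estimate for such a two-point boundary value problem on an interval of length $T > 2$ — the coupling between the two ends is $O(e^{-\mu T})$, hence invertible with uniformly bounded inverse — gives amplitudes bounded by $C(\|x_0\| + \|x_1\| + \|z\|)$, and therefore $\|p^T(t)\| \le C(\|x_0\| + \|x_1\| + \|z\|)$ for all $t \in [0,T]$ with $C$ independent of $T$.

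The main obstacle is making the two-point boundary value estimate uniform in $T$: one must show that the linear system determining the stable/anti-stable amplitudes from $(x_0, x_1)$ has condition number bounded independently of $T$. This is exactly where the absence of a center subspace is used — if $\mathrm{Ham}$ had eigenvalues on the imaginary axis, the off-diagonal coupling would not be exponentially small and the bound would degrade (indeed fail, as the skew-adjoint example with $C = 0$ shows). Concretely, I would write the solution via the block-diagonalized fundamental matrix $e^{t \Lambda \mathrm{Ham} \Lambda^{-1}}$, restrict to the $x$-components at $t = 0$ and $t = T$, and observe the resulting $2n \times 2n$ matrix is $\begin{bmatrix} I & O(e^{-\mu T}) \\ O(e^{-\mu T}) & I \end{bmatrix}$ up to fixed invertible blocks coming from the projections onto the coordinate subspaces; a Neumann series then bounds the inverse uniformly for $T > 2$. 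The forcing term (the constant $C^*C\,\overline{x}$ shifted to the equilibrium) contributes the $\|z\|$ term and is handled the same way since it is absorbed into the particular solution $(\overline{x}, \overline{p})$, which is $T$-independent and bounded by $C\|z\|$.
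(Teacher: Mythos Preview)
Your approach takes a genuinely different route from the paper, and in doing so introduces an extra hypothesis that the lemma does not assume and that is \emph{not} available in one of its applications.

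The paper's proof is much more elementary. After the same reduction to $z=0$ via the steady optimality system (your case splitting is equivalent), it bounds $J^T(u^T)$ uniformly in $T$ by comparing with the piecewise control that steers $x_0\to 0$ on $[0,1]$, rests at $0$, then steers $0\to x_1$ on $[T-1,T]$; this is essentially what you sketch in your first paragraph. The key step you are missing is how to pass from this $L^2$-in-time bound to a pointwise bound on $p^T$: since $(A,B)$ is controllable, the pair $(A^*,B^*)$ is \emph{observable}, and the adjoint equation $-\dot p^T = A^* p^T + C^*C x^T$ is then subject to a standard observability inequality on any unit-length window. This gives, for every $t\in[0,T]$,
\[
\|p^T(t)\|^2 \le K\Bigl[\int_0^T \|B^*p^T\|^2\,ds + \int_0^T \|C^*C x^T\|^2\,ds\Bigr] \le K\,J^T(u^T),
\]
with $K=K(A,B,C)$ independent of $T$. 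No spectral analysis of $\mathrm{Ham}$ is needed.

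Your route through the Riccati change of variables and the stable/anti-stable splitting of $\mathrm{Ham}$ explicitly invokes $\mathscr{L}^0(\mathrm{Ham})=\{0\}$, i.e.\ the weak Hautus test. But Lemma~\ref{lemma_infnorm_bound} is stated and proved under controllability of $(A,B)$ alone, and it is later applied in the proof of Proposition~\ref{prop_controllability_case_velocity}, where $NO^0(C,A)$ may be nontrivial and $\mathrm{Ham}$ \emph{does} have a center. So your argument, as written, would not establish the lemma in the generality required. (Your remark that the bound ``would fail'' in the presence of a center conflates the turnpike estimate with the uniform adjoint bound: in the skew-adjoint, $C=0$ example the turnpike fails, but $\|p^T\|$ is still uniformly bounded---indeed it is $O(1/T)$---because the controllability Gramian grows with $T$.) The observability-inequality argument sidesteps all of this.
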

\begin{proof}[Proof of Lemma \ref{lemma_infnorm_bound}]
	\textit{Step 1} \ \textbf{Reduction to the case running target $z=0$}\\
	By Lemma \ref{lemma_Hamkerrange}, the vector $[0;C^*z]\in \mbox{Range}\left(\mbox{Ham}\right)$, whence there exists $\left(\overline{x},\overline{p}\right)\in \mathbb{R}^n\times \mathbb{R}^n$ solving the steady optimality system
	\begin{equation}\label{steady_OS_6}
	\begin{dcases}
	A\overline{x}-BB^*\overline{p}&=0\\
	-A^*\overline{p}-C^*(C\overline{x}-z)&=0.\\
	\end{dcases}
	\end{equation}
	Introduce the perturbation variables $\tilde{x}^T \coloneqq x^T-\overline{x}$ and $\tilde{p}^T \coloneqq p^T-\overline{p}$. We realize that the pair $\left(\tilde{x}^T,\tilde{p}^T\right)$ solves
	\begin{equation}\label{OS_controllability_6}
	\begin{dcases}
	\dot{\tilde{x}}^T(t)=A\tilde{x}^T(t)-BB^*\tilde{p}^T(t)\hspace{1 cm}& t\in (0,T)\\
	-\dot{\tilde{p}}^T(t)=A^*\tilde{p}^T(t)+C^*C\tilde{x}^T(t)&t\in (0,T)\\
	\tilde{x}^T(0)=x_0-\overline{x}\\
	\tilde{x}^T(T)=x_1-\overline{x},
	\end{dcases}
	\end{equation}
	the optimality system for the control problem \eqref{linear_ocp_2}-\eqref{functional_ocp_2}, with terminal conditions $\tilde{x}^T(0)=x_0-\overline{x}$ and $\tilde{x}^T(T)=x_1-\overline{x}$. This allows us to reduce to the case $z=0$. To avoid weighting the notation, we will drop the tilde in $\tilde{x}^T$ and $\tilde{p}^T$.\\
	\textit{Step 2} \  \textbf{Upper bound for the minimum value of the functional}\\
	Consider the control
	\begin{equation}\label{picewisecontrol}
	\hat{u}(t)\coloneqq
	\begin{dcases}
	u_0(t) \quad & t\in (0,1)\\
	0 \quad & t\in (1,T-1)\\
	u_1(t-T+1) \quad & t\in (T-1,T),\\
	\end{dcases}
	\end{equation}
	where
	$u_0:(0,1)\longrightarrow \mathbb{R}^m$ drives the control system \eqref{linear_ocp_2} from $x_0$ to $0$ in time $1$ and $u_1:(0,1)\longrightarrow \mathbb{R}^m$ steers \eqref{linear_ocp_2} from $0$ to $x_1$ in time $1$. Consequently the control $\hat{u}$, steers the system from $x_0$ to $x_1$ in time $T$.
	
	Now, for $i=0,1$, the control $u_i$ operates in an interval of length one. Then, there exists some $K$, independent of $T$, such that
	\begin{equation*}
	\|u_i\|_{L^2(0,1)}\leq K\|x_i\|,
	\end{equation*}
	whence
	\begin{equation}\label{bound_tildeu}
	\|\hat{u}\|_{L^2(0,T)}\leq K\left[\|x_0\|+\|x_1\|\right].
	\end{equation}
	Now, let $\tilde{x}$ be the solution to \eqref{linear_ocp_2}, with control $\hat{u}$. By definition of $\hat{u}$, $\tilde{x}(t)=0$, for $t\in [1,T-1]$, whence
	\begin{eqnarray}\label{bound_tildex}
	\int_0^T\|\tilde{x}\|^2dt&=&\int_0^1\|\tilde{x}\|^2 dt+\int_{T-1}^T\|\tilde{x}\|^2dt\nonumber\\
	&\leq&K\left[\|x_0\|^2+\|x_1\|^2+\int_0^1\|u_0\|^2dt+\int_{T-1}^{T}\|u_1\|^2dt\right]\nonumber\\
	&\leq&K\left[\|x_0\|^2+\|x_1\|^2\right],
	\end{eqnarray}
	the constant $K$ being independent of the time horizon. Therefore, by \eqref{bound_tildeu} and \eqref{bound_tildex} and since $u^T$ is a minimizer of $J^T$,
	\begin{equation}\label{est_min_val_functional}
	J^T(u^T)\leq J^T(\hat{u})\leq K\left[\|x_0\|^2+\|x_1\|^2\right].
	\end{equation}
	\textit{Step 3} \  \textbf{Boundedness of $\|p^T(0)\|$ and $\|p^T(T)\|$}\\
	By assumptions, $(A,B)$ is controllable. Then, $(A^*,B^*)$ is observable. Therefore, by adapting the techniques of \cite[remark 2.1 page 4245]{porretta2013long}, for every $t\in [0,T]$, we have
	\begin{eqnarray}
	\|p^T(t)\|^2&\leq&K\left[\int_0^T\|B^*p^T(s)\|^2ds+\int_0^T\|C^*Cx^T(s)\|^2ds\right]\nonumber\\
	&\leq&K\left[\int_0^T\|u^T(s)\|^2ds+\int_0^T\|Cx^T(s)\|^2ds\right]\nonumber\\
	&=&KJ^T(u^T)\leq K\left[\|x_0\|^2+\|x_1\|^2\right],
	\end{eqnarray}
	with $K$ independent of $T$, as desired.\\
\end{proof}

Let $H\in \mathcal{M}_{N\times N}(\mathbb{R})$ be a square matrix. Following the notation of \cite{callier1995convergence}, $\mathscr{L}^{-}(H)$, $\mathscr{L}^{0}(H)$ and $\mathscr{L}^{+}(H)$ denote resp. the $H$-invariant subspaces of $\mathbb{R}^n$ spanned by the generalized eigenvectors of $H$ corresponding to eigenvalues $\lambda$ of $H$ such that $\mbox{Re}(\lambda)<0$, $\mbox{Re}(\lambda)=0$ and $\mbox{Re}(\lambda)> 0$. In the proof Proposition \ref{prop_controllability_case} we use the following Lemma, proved in the appendix.

\begin{lemma}\label{lemma_1}
	Let $H\in \mathcal{M}_{N\times N}(\mathbb{R})$ be a square matrix. Let $y$ be a solution to
	\begin{equation}\label{linear_nohomo}
	\dot{y}=Hy.
	\end{equation}
	Then, for every $t\in [0,T]$
	\begin{equation}\label{est_L_inf}
	\mbox{dist}\left(y(t),\mathscr{L}^{0}(H)\right)\leq K\left[\exp\left(-\mu t\right)\|y(0)\|+\exp\left(-\mu\left(T-t\right)\right)\|y(T)\|\right]
	\end{equation}
	the constants $K$ and $\mu>0$ being independent of the time horizon.
\end{lemma}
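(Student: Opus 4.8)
The plan is to reduce everything to the spectral decomposition of $H$ according to the sign of the real part of its eigenvalues. Write $\mathbb{R}^N=\mathscr{L}^{-}(H)\oplus\mathscr{L}^{0}(H)\oplus\mathscr{L}^{+}(H)$ and let $\pi_{-},\pi_{0},\pi_{+}$ be the associated generalized-eigenspace projections (bounded operators whose norms depend only on $H$). Each of these subspaces is $H$-invariant, hence the projections commute with $e^{tH}$; so, setting $y_{\bullet}(t)\coloneqq\pi_{\bullet}y(t)$ for $\bullet\in\{-,0,+\}$, each $y_{\bullet}$ solves $\dot y_{\bullet}=Hy_{\bullet}$ and remains in the corresponding subspace for all time, with $y=y_{-}+y_{0}+y_{+}$. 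Since $y_{0}(t)\in\mathscr{L}^{0}(H)$, we get
\[\mathrm{dist}\bigl(y(t),\mathscr{L}^{0}(H)\bigr)\le\|y(t)-y_{0}(t)\|=\|y_{-}(t)+y_{+}(t)\|\le\|y_{-}(t)\|+\|y_{+}(t)\|,\]
so it is enough to bound the stable and unstable components separately.

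For the stable part, every eigenvalue of the restriction $H|_{\mathscr{L}^{-}(H)}$ has strictly negative real part, so by the Jordan form there exist constants $K_{-}\ge1$ and $\mu_{-}>0$, depending only on $H$, with $\|e^{tH}v\|\le K_{-}e^{-\mu_{-}t}\|v\|$ for all $v\in\mathscr{L}^{-}(H)$ and $t\ge0$; applied to $v=y_{-}(0)=\pi_{-}y(0)$ this gives $\|y_{-}(t)\|\le K_{-}\|\pi_{-}\|\,e^{-\mu_{-}t}\|y(0)\|$. For the unstable part, every eigenvalue of $H|_{\mathscr{L}^{+}(H)}$ has strictly positive real part, so the backward flow is contractive on $\mathscr{L}^{+}(H)$: there are $K_{+}\ge1$, $\mu_{+}>0$ (again depending only on $H$) with $\|e^{-sH}v\|\le K_{+}e^{-\mu_{+}s}\|v\|$ for all $v\in\mathscr{L}^{+}(H)$ and $s\ge0$. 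Since $y_{+}(t)=e^{-(T-t)H}y_{+}(T)$ and $y_{+}(T)=\pi_{+}y(T)$, this yields $\|y_{+}(t)\|\le K_{+}\|\pi_{+}\|\,e^{-\mu_{+}(T-t)}\|y(T)\|$ for all $t\in[0,T]$.

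Combining the two bounds with $\mu\coloneqq\min\{\mu_{-},\mu_{+}\}$ and $K\coloneqq\max\{K_{-}\|\pi_{-}\|,K_{+}\|\pi_{+}\|\}$ produces exactly \eqref{est_L_inf}; when $\mathscr{L}^{-}(H)$ or $\mathscr{L}^{+}(H)$ is trivial the corresponding term simply drops out and the argument is unchanged. The only thing to watch is the $T$-independence of $K$ and $\mu$, which is immediate here since both are extracted directly from the spectral data of $H$ (the exponential rates from the real parts of the eigenvalues, the polynomial prefactors from the Jordan block sizes) and nothing in the construction refers to the time horizon. There is thus no genuine obstacle — the lemma is precisely the abstract linear-algebra skeleton underlying classical turnpike estimates, and I expect the write-up to be short.
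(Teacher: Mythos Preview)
Your proof is correct and follows essentially the same approach as the paper: both split $\mathbb{R}^N=\mathscr{L}^{-}(H)\oplus\mathscr{L}^{0}(H)\oplus\mathscr{L}^{+}(H)$, bound the stable component by forward exponential decay and the unstable component by backward exponential decay, and combine. The only cosmetic difference is that the paper introduces the time-reversed variable $\tilde{y}_3(t)=y_3(T-t)$ for the unstable part whereas you write $y_{+}(t)=e^{-(T-t)H}y_{+}(T)$ directly; your treatment of the constants is in fact slightly more explicit.
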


We are now in position to prove Proposition \ref{prop_controllability_case}.
\begin{proof}[Proof of Proposition \ref{prop_controllability_case}]
	\textit{Step 1} \  \textbf{Hyperbolicity of the Hamiltonian matrix}\\
	If the weak Hautus test \eqref{weakHtest} is verified, by Lemma \ref{lemma_Ham_spectrum}, the critical subspace $\mathscr{L}^{0}(H)=\left\{0\right\}$.\\
	\textit{Step 2} \  \textbf{Uniqueness of the minimizer for the steady problem}\\
	By the above step, we have $\mathscr{L}^{0}(H)=\left\{0\right\}$. Then, the steady optimality system 
	\begin{equation}\label{steady_OS}
	\begin{dcases}
	A\overline{x}-BB^*\overline{p}&=0\\
	-A^*\overline{p}-C^*(C\overline{x}-z)&=0\\
	\end{dcases}
	\end{equation}
	admits a unique solution $\left(\overline{x},\overline{p}\right)$, whence the optimal pair for the minimization problem \eqref{steady_functional_linear_quadratic_fixed_endpoint} is unique and is given by $\left(\overline{u},\overline{x}\right)=\left(-B^*\overline{p},\overline{x}\right)$.\\
	\textit{Step 3} \  \textbf{Conclusion}\\
	If the weak Hautus test \eqref{weakHtest} is verified, by Lemma \ref{lemma_Ham_spectrum}, the critical subspace $\mathscr{L}^{0}(H)=\left\{0\right\}$. Then, Lemma \ref{lemma_infnorm_bound} and Lemma \ref{lemma_1} allow us to conclude.
\end{proof}

\subsection{Exponential velocity turnpike}
\label{subsec:2.2}

This subsection has been inspired by \cite{faulwasser2019towards} where the notion of velocity turnpike has been introduced. In particular, we give a theoretical explanation of the illustrative example in \cite[section 3]{faulwasser2019towards}. Note that, in the proposition below the Hamiltonian matrix may have imaginary eigenvalues.

By \cite[fact 1.(f)]{callier1995convergence},
\begin{equation}
	\mathscr{L}^0\left(A-BB^*\widehat{E}\right)=NO^0(C,A).
\end{equation}
Then, we can decompose
\begin{equation}
	\mathbb{R}^n = NO^0(C,A) \oplus \mathscr{L}^{-}\left(A-BB^*\widehat{E}\right)
\end{equation}
and define the corresponding projections $\mathbb{P}_1$ onto $NO^0(C,A)$ and $\mathbb{P}_2$ onto\\
$\mathscr{L}^{-}\left(A-BB^*\widehat{E}\right)$.

\begin{proposition}\label{prop_controllability_case_velocity}
	Suppose $(A,B)$ is controllable and $NO^0(C,A)\subseteq \ker\left(A\right)$. Take $T>2$. Let $u^T$ be an optimal control for \eqref{linear_ocp_2}-\eqref{functional_ocp_2} and $x^T$ be the optimal state. Then, there exist $T$-independent $K$ and $\mu >0$ such that
	\begin{equation}\label{turnpike_estimate_velocity_1}
	\left\|u^T(t)-\hat{u}^T\right\|+\left\|\mathbb{P}_2x^T(t)-\hat{x}^T\right\|\leq K\left[\exp\left(-\mu t \right)+\exp\left(-\mu\left(T-t\right)\right)\right],
	\end{equation}
	and
	\begin{equation}\label{turnpike_estimate_velocity_3}
	\left\|u^T(t)-\hat{u}^T\right\|+\left\|\mathbb{P}_1\left[x^T(t)-\left(x_0-tBB^*\hat{q}^T\right)\right]\right\|\leq K\left[\exp\left(-\mu t \right)+\exp\left(-\mu\left(T-t\right)\right)\right],
	\end{equation}
	where $\hat{q}^T\in\ker\left(\left(A-BB^*\widehat{E}\right)^*\right)$, $\hat{x}^T\in \mathscr{L}^{-}\left(A-BB^*\widehat{E}\right)$ and $\hat{u}^T=-BB^*\widehat{E}\hat{x}^T-BB^*\hat{q}^T$. Furthermore,
	\begin{equation}\label{turnpike_estimate_velocity_5}
	\mbox{dist}\left(\left(\hat{u}^T,\hat{x}^T\right),\mbox{argmin}\left[J_s\right] \right)^2 \leq \frac{K}{T},
	\end{equation}
	for some $K$ is independent of the time horizon $T$.
\end{proposition}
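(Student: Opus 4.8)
The plan is to use the antistrong solution $\widehat E$ of the Algebraic Riccati Equation \eqref{ARE} — available since $(A,B)$ is controllable, hence stabilizable — to block-triangularize the optimality system \eqref{OS_controllability} exactly as in the proof of Lemma \ref{lemma_Ham_spectrum}, and then to exploit the hypothesis $NO^0(C,A)\subseteq\ker A$. First I would record the structural fact that both $\widehat E$ and $A_+:=A-BB^*\widehat E$ vanish on $NO^0(C,A)$. Indeed, for $v\in NO^0(C,A)\subseteq\ker A$ one has $Av=0$ and $Cv=0$, so applying \eqref{ARE} to $v$ gives $A^*\widehat Ev=\widehat EBB^*\widehat Ev$; pairing with $v$ yields $\|B^*\widehat Ev\|^2=\langle\widehat Ev,Av\rangle=0$, hence $A^*\widehat Ev=0$, so $\widehat Ev\in\ker A^*\cap\ker B^*=\{0\}$ by observability of $(A^*,B^*)$. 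Together with the identity $\mathscr L^0(A_+)=NO^0(C,A)$ recalled just before the statement, this gives $\ker A_+=NO^0(C,A)$, shows that $A_+$ — and, by a dimension count for kernels, $A_+^*$ — acts as $0$ on its critical subspace, and gives $\ker A\cap\ker C=NO^0(C,A)$.

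Next I would reduce to running target $z=0$ by subtracting a steady solution of the steady optimality system (available from Lemma \ref{lemma_Hamkerrange}), and pass to $q:=p^T-\widehat Ex^T$. Using \eqref{ARE}, \eqref{OS_controllability} becomes the decoupled system $\dot x^T=A_+x^T-BB^*q$, $\dot q=-A_+^*q$, with $x^T(0)=x_0$, $x^T(T)=x_1$ (in the shifted data) and $q(0),q(T)$ bounded uniformly in $T>2$ by Lemma \ref{lemma_infnorm_bound}. Splitting $q=\hat q^T+q^-$ along $\ker A_+^*\oplus\mathscr L^-(A_+^*)$, the component $\hat q^T\in\ker\big((A-BB^*\widehat E)^*\big)$ is constant (since $A_+^*$ vanishes on $\ker A_+^*$), while $q^-$ solves $\dot q^-=-A_+^*q^-$ on $\mathscr L^-(A_+^*)$, where $-A_+^*$ is anti-Hurwitz; the uniform bound on $q(T)$ then forces $\|q^-(t)\|\le Ke^{-\mu(T-t)}$ and $\|\hat q^T\|\le K$. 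For the state I would use the projections $\mathbb P_1,\mathbb P_2$, which commute with $A_+$: on $\mathscr L^-(A_+)$ one defines $\hat x^T$ by $A_+\hat x^T=\mathbb P_2BB^*\hat q^T$, and then $\mathbb P_2x^T-\hat x^T$ obeys a Hurwitz equation forced by $q^-$, so a routine Duhamel estimate gives $\|\mathbb P_2x^T(t)-\hat x^T\|\le K[e^{-\mu t}+e^{-\mu(T-t)}]$; on $NO^0(C,A)=\ker A_+$ the drift of $A_+$ disappears, so $\frac{d}{dt}\mathbb P_1x^T=-\mathbb P_1BB^*(\hat q^T+q^-)$ integrates to $\mathbb P_1x^T(t)=\mathbb P_1x_0-t\,\mathbb P_1BB^*\hat q^T+O(e^{-\mu(T-t)})$. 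Substituting these into $u^T=-B^*p^T=-B^*(q+\widehat Ex^T)$ and using $\widehat E\mathbb P_1=0$ gives \eqref{turnpike_estimate_velocity_1} and \eqref{turnpike_estimate_velocity_3} with $\hat u^T=-B^*\hat q^T-B^*\widehat E\hat x^T$; the quantities for the original ($z\neq0$) problem are recovered by adding back the steady state, the bookkeeping being routine.

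For \eqref{turnpike_estimate_velocity_5} the input is the $T$-uniform bound $J^T(u^T)\le K$ established inside Lemma \ref{lemma_infnorm_bound}. Since $\|u^T(t)-\hat u^T\|$ and $\|\mathbb P_2x^T(t)-\hat x^T\|$ are exponentially small away from the endpoints, $C\mathbb P_1=0$, and $\hat u^T$ is bounded, integrating $\|u^T\|^2$ and $\|Cx^T\|^2=\|C\mathbb P_2x^T\|^2$ over $[1,T-1]$ and comparing with $2J^T(u^T)$ yields $\|\hat u^T\|^2\le K/T$ and $\|C\hat x^T\|^2\le K/T$. The extra ingredient is the terminal constraint: evaluating the formula for $\mathbb P_1x^T$ at $t=T$ and using $x^T(T)=x_1$ forces $T\,\|\mathbb P_1BB^*\hat q^T\|\le\|\mathbb P_1(x_0-x_1)\|+O(1)=O(1)$, that is $\|A\hat x^T+B\hat u^T\|=\|\mathbb P_1BB^*\hat q^T\|\le K/T$ (since $A\hat x^T+B\hat u^T=-\mathbb P_1BB^*\hat q^T$), whence $\|A\hat x^T\|\le KT^{-1/2}$. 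Finally, since the linear map $x\mapsto(Ax,Cx)$ has kernel exactly $\ker A\cap\ker C=NO^0(C,A)$, there is $c>0$ with $\mbox{dist}(x,NO^0(C,A))^2\le c^{-1}(\|Ax\|^2+\|Cx\|^2)$; applying this to $\hat x^T$ and recalling $\mbox{argmin}(J_s)=\{(\overline u,\overline x)\}+\{0\}\times NO^0(C,A)$ from Lemma \ref{lemma_minimizer steady}, we conclude $\mbox{dist}\big((\hat u^T,\hat x^T),\mbox{argmin}(J_s)\big)^2=\|\hat u^T\|^2+\mbox{dist}(\hat x^T,NO^0(C,A))^2\le K/T$.

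The main obstacle is twofold. First, one must upgrade the bare hypothesis $NO^0(C,A)\subseteq\ker A$ to the structural facts $\widehat E|_{NO^0(C,A)}=0$ and $A_+|_{NO^0(C,A)}=0$: without them the optimality system does not split into a Hurwitz part and a pure-drift part, and the picture of linear growth confined to the $\mathbb P_1$-direction collapses. Second, the $1/T$ rate in \eqref{turnpike_estimate_velocity_5} is not a consequence of the exponential estimates alone; it requires combining the $T$-uniform bound on $J^T(u^T)$, the near-constancy of $(u^T,\mathbb P_2x^T)$ on the bulk of $[0,T]$, and — crucially — the terminal condition $x^T(T)=x_1$, which is what turns the a priori $O(1)$ velocity estimate into an $O(1/T)$ one and thereby exhibits $(\hat u^T,\hat x^T)$ as an $O(1/T)$-almost feasible pair with $O(1/T)$ cost for the steady problem.
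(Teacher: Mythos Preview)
Your approach is essentially the same as the paper's for \eqref{turnpike_estimate_velocity_1} and \eqref{turnpike_estimate_velocity_3}: triangularize via $q=p^T-\widehat Ex^T$, show $\mathscr L^0(A_+)=\ker A_+$, split $q$ into its constant $\ker A_+^*$-component $\hat q^T$ and an exponentially decaying remainder, and then handle $\mathbb P_2x^T$ and $\mathbb P_1x^T$ separately. Your direct verification that $\widehat E$ vanishes on $NO^0(C,A)$ (pair the ARE with $v$ and use observability of $(A^*,B^*)$) replaces the paper's citation of \cite[Fact 1.(d)]{callier1995convergence} but is otherwise the same content.

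Where you diverge is in \eqref{turnpike_estimate_velocity_5}. The paper's Step 5 simply combines the upper bound $J^T(u^T)\le K$ with the lower bound $J^T(u^T)\ge T\big[\|\hat u^T\|^2+\|C\hat x^T\|^2\big]-K$ and then invokes $\hat x^T\in\mathscr L^-(A_+)$ to conclude, without making the passage from $\|C\hat x^T\|^2\le K/T$ to the distance bound explicit. You instead exploit the terminal condition $x^T(T)=x_1$: evaluating the $\mathbb P_1$-component at $t=T$ forces $\|\mathbb P_1BB^*\hat q^T\|\le K/T$, and since $A\hat x^T+B\hat u^T=-\mathbb P_1BB^*\hat q^T$ this makes $(\hat u^T,\hat x^T)$ an $O(1/T)$-almost feasible pair; together with $\|A\hat x^T\|\lesssim T^{-1/2}$ and $\|C\hat x^T\|\lesssim T^{-1/2}$, the injectivity of $x\mapsto(Ax,Cx)$ modulo $\ker A\cap\ker C=NO^0(C,A)$ delivers the distance estimate. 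This is a more transparent route than the paper's, and it clarifies precisely how the fixed endpoint enters the $1/T$ rate; conversely, the paper's argument is shorter but leaves the mechanism behind the final implication implicit.
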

\begin{proof}[Proof of Proposition \ref{prop_controllability_case_velocity}]
	The optimal control for the time-evolution problem \eqref{linear_ocp_2}-\eqref{functional_ocp_2} reads as $u^T=-B^*p^T$, where
\begin{equation}\label{OS_controllability_3}
\begin{dcases}
\dot{x}^T(t)=Ax^T(t)-BB^*p^T(t)\hspace{1 cm}& t\in (0,T)\\
-\dot{p}^T(t)=A^*p^T(t)+C^*\left(Cx^T(t)-z\right)&t\in (0,T)\\
u^T(t)=-B^*p^T(t)&t\in (0,T)\\
x^T(0)=x_0\\
x^T(T)=x_1.
\end{dcases}
\end{equation}
By working in perturbation variables, as in the step 1 of the proof of Lemma \ref{lemma_infnorm_bound}, we can reduce to the case $z=0$.\\
	\textit{Step 1} \  \textbf{Change of variable}\\
	Set the linear transformation
	\begin{equation}\label{prop_controllability_case_velocity_defLambda}
	\Lambda \coloneqq 		\begin{bmatrix}
	I_n&0\\
	-\widehat{E}&I_n.
	\end{bmatrix}
	\end{equation}
	By using the Algebraic Riccati Equation \eqref{ARE}, we have\footnote{
		\begin{equation*}
		\Lambda^{-1}=\begin{bmatrix}
		I_n&0\\
		\widehat{E}&I_n.
		\end{bmatrix}
		\end{equation*}
	}
	\begin{equation*}
	\Lambda \hspace{0.03 cm} \mbox{Ham} \hspace{0.03 cm} \Lambda^{-1} = \begin{bmatrix}
	A-BB^*\widehat{E}&-BB^*\\
	0&-\left(A-BB^*\widehat{E}\right)^*.
	\end{bmatrix}
	\end{equation*}
	Set further $q^T\coloneqq -\widehat{E} x^T+p^T$. Then the pair $\left(x^T,q^T\right)$ solves
	\begin{equation}\label{OS_controllability_newvariables}
	\begin{dcases}
	\dot{x}^T(t)=\left(A-BB^*\widehat{E}\right)x^T(t)-BB^*q^T(t)\hspace{1 cm}& t\in (0,T)\\
	-\dot{q}^T(t)=\left(A-BB^*\widehat{E}\right)^*q^T(t)&t\in (0,T)\\
	x^T(0)=x_0\\
	x^T(T)=x_1.
	\end{dcases}
	\end{equation}
	\textit{Step 2} \  \textbf{Proof of the equality $\mathscr{L}^0\left(A-BB^*\widehat{E}\right)=\ker\left(A-BB^*\widehat{E}\right)$}\\
	By \cite[fact 1.(f)-(d)]{callier1995convergence} and the hypothesis $NO^0(C,A)\subseteq \ker(A)$, we have
	\begin{equation}\label{prop_controllability_case_velocity_eq24}
		\mathscr{L}^0\left(A-BB^*\widehat{E}\right)=NO^0(C,A)\subseteq \ker(A),
	\end{equation}
	and
	\begin{equation}\label{prop_controllability_case_velocity_eq26}
		\mathscr{L}^0\left(A-BB^*\widehat{E}\right)= NO^0(C,A)\subseteq \ker\left(\widehat{E}\right),
	\end{equation}
	whence
	\begin{equation}\label{prop_controllability_case_velocity_eq27}
	\mathscr{L}^0\left(A-BB^*\widehat{E}\right)\subseteq \ker\left(A\right)\cap\ker\left(\widehat{E}\right)\subseteq \ker\left(A-BB^*\widehat{E}\right),
	\end{equation}	
	Therefore, by definition of critical subspace $\mathscr{L}^0$
	\begin{equation}\label{prop_controllability_case_velocity_eq28}
		\mathscr{L}^0\left(A-BB^*\widehat{E}\right)=\ker\left(A-BB^*\widehat{E}\right).
	\end{equation}
	\textit{Step 3} \  \textbf{Estimate \eqref{turnpike_estimate_velocity_1}}\\
	From step 2, we have also
	\begin{equation}\label{prop_controllability_case_velocity_eq30}
		\mathscr{L}^0\left(\left(A-BB^*\widehat{E}\right)^*\right)=\ker\left(\left(A-BB^*\widehat{E}\right)^*\right).
	\end{equation}
	To finish the proof, we firstly focus on the second equation in \eqref{OS_controllability_newvariables}, satisfied by $q^T$. By \eqref{prop_controllability_case_velocity_eq30}, there exists $\hat{q}^T\in \mathscr{L}^0\left(\left(A-BB^*\widehat{E}\right)^*\right)=\ker\left(\left(A-BB^*\widehat{E}\right)^*\right)$, such that
	\begin{equation}\label{prop_controllability_case_velocity_eq32}
		\left\|q^T(t)-\hat{q}^T\right\|\leq K\left[\exp\left(-\mu (T-t)\right)\right]\left\|q^T(T)\right\|,\hspace{0.3 cm}\forall \ t\in [0,T]
	\end{equation}
	the constant $K$ and $\mu>0$ being independent of $T>2$. We need now to estimate $\left\|q^T(T)\right\|$, uniformly on $T>2$. By definition of the linear transformation $\Lambda$, we have
	\begin{equation}
		q^T(T) = p^T(T)-\widehat{E}x^T(T) = p^T(T)-\widehat{E}x_1.
	\end{equation}
	By Lemma \ref{lemma_infnorm_bound}, there exists $K=K(A,B,C)$ such that, for any time horizon $T>2$ and time instant $t\in [0,T]$, we have
	\begin{equation}
	\left\|p^T(t)\right\|\leq K\left[\left\|x_0\right\|+\left\|x_1\right\|+\left\|z\right\|\right],
	\end{equation}
	whence
\begin{equation}\label{prop_controllability_case_velocity_eq34}
\left\|	q^T(T)\right\|\leq \left\|p^T(T)\right\|+K\left\|x_1\right\|\leq K\left[\left\|x_0\right\|+\left\|x_1\right\|+\left\|z\right\|\right]
\end{equation}
	Therefore, by \eqref{prop_controllability_case_velocity_eq32} and \eqref{prop_controllability_case_velocity_eq34}, we have
	\begin{equation}\label{prop_controllability_case_velocity_eq36}
	\left\|q^T(t)-\hat{q}^T\right\|\leq K\left[\exp\left(-\mu (T-t)\right)\right],\hspace{0.3 cm}\forall \ t\in [0,T]
	\end{equation}
	the constants $\mu=\mu(A,B,C)$ and $K=K(A,B,C,x_0,x_1,z)$.
	
	At this stage, by definition of $\mathbb{P}_2$,
	\begin{equation}\label{linear_ocp_6}
	\begin{dcases}
	\frac{d}{dt}\left[\mathbb{P}_2x^T\right]=\mathbb{P}_2\left(A-BB^*\widehat{E}\right)\mathbb{P}_2x^T-\mathbb{P}_2BB^*q^T\hspace{2.8 cm} & \mbox{in} \hspace{0.10 cm}(0,T)\\
	\mathbb{P}_2x^T(0)=\mathbb{P}_2x_0.
	\end{dcases}
	\end{equation}
	Now, $\mathbb{P}_2\left(A-BB^*\widehat{E}\right)\mathbb{P}_2$ is stable, which, together with \eqref{prop_controllability_case_velocity_eq36}, leads to
	\begin{equation}\label{prop_controllability_case_velocity_eq39}
		\left\|\mathbb{P}_2x^T(t)-\hat{x}^T\right\|\leq K\left[\exp\left(-\mu t \right)+\exp\left(-\mu\left(T-t\right)\right)\right],
	\end{equation}
	for some $\hat{x}^T\in \mathscr{L}^{-}\left(A-BB^*\widehat{E}\right)$. Now, by \eqref{prop_controllability_case_velocity_eq39} and \eqref{prop_controllability_case_velocity_eq26}, the optimal control
	\begin{equation}
		u^T=-B^*\widehat{E}x^T-B^*q^T=-B^*\widehat{E}\mathbb{P}_2x^T-B^*q^T.
	\end{equation}
	Then, for  $\hat{u}^T=-BB^*\widehat{E}\hat{x}^T-BB^*\hat{q}^T$, we have
	\begin{equation}\label{prop_controllability_case_velocity_eq42}
	\left\|u^T(t)-\hat{u}^T\right\|\leq K\left[\exp\left(-\mu t \right)+\exp\left(-\mu\left(T-t\right)\right)\right].
	\end{equation}
	This finishes the proof of \eqref{turnpike_estimate_velocity_1}.\\
	\textit{Step 4} \  \textbf{Proof of \eqref{turnpike_estimate_velocity_3}}\\
	To conclude, it suffices to prove
	\begin{equation}\label{turnpike_estimate_velocity_6}
	\left\|\mathbb{P}_1\left[x^T(t)-\left(x_0-tBB^*\hat{q}^T\right)\right]\right\|\leq K\left[\exp\left(-\mu t \right)+\exp\left(-\mu\left(T-t\right)\right)\right].
	\end{equation}
	We have
	\begin{equation}\label{linear_ocp_12}
	\begin{dcases}
	\frac{d}{dt}\left[\mathbb{P}_1x^T\right]=\mathbb{P}_1\left(A-BB^*\widehat{E}\right)\mathbb{P}_1x^T-\mathbb{P}_1BB^*q^T\hspace{2.8 cm} & \mbox{in} \hspace{0.10 cm}(0,T)\\
	\mathbb{P}_1x^T(0)=\mathbb{P}_1x_0.
	\end{dcases}
	\end{equation}
	Now, by assumption, the kernel $\ker\left(A-BB^*\widehat{E}\right)=\mathscr{L}^0\left(A-BB^*\widehat{E}\right)=NO^0(C,A)$, whence
	\begin{equation}\label{linear_ocp_16}
	\begin{dcases}
	\frac{d}{dt}\left[\mathbb{P}_1x^T\right]=-\mathbb{P}_1BB^*q^T\hspace{2.8 cm} & \mbox{in} \hspace{0.10 cm}(0,T)\\
	\mathbb{P}_1x^T(0)=\mathbb{P}_1x_0,
	\end{dcases}
	\end{equation}
	i.e. $\mathbb{P}_1x^T$ reads as
	\begin{eqnarray}
	\mathbb{P}_1x^T(t)&=&\mathbb{P}_1x_0-\int_0^t\mathbb{P}_1BB^*q^T(s)ds\nonumber\\
	&=&\mathbb{P}_1x_0-\int_0^t\mathbb{P}_1BB^*\hat{q}^Tds-\int_0^t\mathbb{P}_1BB^*\left[q^T(s)-\hat{q}^T\right]ds\nonumber\\
	&=&\mathbb{P}_1x_0-t\mathbb{P}_1BB^*\hat{q}^T-\int_0^t\mathbb{P}_1BB^*\left[q^T(s)-\hat{q}^T\right]ds.
	\end{eqnarray}
	By \eqref{prop_controllability_case_velocity_eq36}, we have, for any $t\in [0,T]$
	\begin{eqnarray}
	\int_0^t\left\|\mathbb{P}_1BB^*\left[q^T(s)-\hat{q}^T\right]\right\|ds&\leq&K\int_0^t\exp\left(-\mu (T-s)\right)ds\nonumber\\
	&=&\frac{K}{\mu}\left[\exp\left(-\mu (T-t)\right)-\exp\left(-\mu T\right)\right]\nonumber\\
	&\leq&\frac{K}{\mu}\exp\left(-\mu (T-t)\right),
	\end{eqnarray}
	whence
	\begin{equation}
		\left\|\mathbb{P}_1x^T(t)-\mathbb{P}_1x_0-t\mathbb{P}_1BB^*\hat{q}^T\right\|\leq \frac{K}{\mu}\exp\left(-\mu (T-t)\right),
	\end{equation}
	as desired.\\
	\textit{Step 5} \  \textbf{Proof of \eqref{turnpike_estimate_velocity_5}}\\
	On the one hand, defining a control $\hat{u}$ as in \eqref{picewisecontrol}, we have the upper bound
	\begin{equation}\label{est_min_val_functional_turnpike_estimate_velocity}
	J^T(u^T)\leq J^T(\hat{u})\leq K\left[\|x_0\|^2+\|x_1\|^2\right].
	\end{equation}
	On the other hand, employing \eqref{turnpike_estimate_velocity_1} we get the lower bound
	\begin{equation}\label{turnpike_estimate_velocity_12}
	J^T(u^T)\geq T\left[\left\|\hat{u}^T\right\|^2+\left\|C\hat{x}^T\right\|^2\right]-K,
	\end{equation}
	where the constant $K$ is independent of the time horizon $T$ and the terminal data $x_0$ and $x_1$. Since $\hat{x}^T\in \mathscr{L}^{-}\left(A-BB^*\widehat{E}\right)$,
	 \eqref{est_min_val_functional_turnpike_estimate_velocity} together with \eqref{turnpike_estimate_velocity_12} yields \eqref{turnpike_estimate_velocity_5}. This finishes the proof.
\end{proof}

\begin{example}
	We show now how our techniques apply in the example presented in \cite[section 3]{faulwasser2019towards}. We consider the control system
	\begin{equation}\label{linear_ocp_2_example}
	\begin{dcases}
	\dot{x}_1=x_2\hspace{2.8 cm} & \mbox{in} \hspace{0.10 cm}(0,T)\\
	\dot{x}_2=u\hspace{2.8 cm} & \mbox{in} \hspace{0.10 cm}(0,T)\\
	x_1(0)=x_{0,1}, \ x_2(0)=x_{0,2}, \ x_1(T)=x_{1,1}, \ x_2(T)=x_{1,2}.
	\end{dcases}
	\end{equation}
	
	We introduce the set of admissible controls
	\begin{equation}\label{ocp_both_endpoints_fixed_example}
	\mathscr{U}_{\mbox{\tiny{ad}}}\coloneqq \left\{u\in L^2(0,T;\mathbb{R}) \ | \ \mbox{there exists a solution}\hspace{0.3 cm} x \hspace{0.3 cm} \mbox{to \eqref{linear_ocp_2_example}}\right\}.
	\end{equation}
	
	We formulate the optimal control problem
	\begin{equation}\label{functional_ocp_2_example}
	\min_{u\in \mathscr{U}_{\mbox{\tiny{ad}}}}J^{T}(u)=\frac12 \int_{0}^T \left[|u(t)|^2+|x_2(t)|^2\right] dt.
	\end{equation}
	For this special problem, we have $NO^0(C,A)= \ker\left(A\right)$, whence Proposition \ref{prop_controllability_case_velocity} is applicable.
	
	We write the Algebraic Riccati Equation associated to the above problem
	\begin{equation}\label{ARE_example}
	\begin{bmatrix}
	0&0\\
	1&0
	\end{bmatrix}\widehat{E}+\widehat{E}\begin{bmatrix}
	0&1\\
	0&0
	\end{bmatrix}-\widehat{E}\begin{bmatrix}
	0&1\\
	0&0
	\end{bmatrix}\widehat{E}+\begin{bmatrix}
0&0\\
0&1
\end{bmatrix}=0.
	\end{equation}
	
	The unique positive semidefinite solution to the above equation is given by
	\begin{equation}
		\widehat{E}=\begin{bmatrix}
	0&0\\
0&1
\end{bmatrix},
	\end{equation}
	whence
	\begin{equation}
		A-BB^*\widehat{E}=\begin{bmatrix}
	0&1\\
0&-1
\end{bmatrix},
	\end{equation}
	with spectrum $\sigma\left(A-BB^*\widehat{E}\right)=\left\{0,-1\right\}$. We have
	\begin{itemize}
		\item $\ker\left(A-BB^*\widehat{E}\right)=\mbox{span}\left\{\mathbf{e}_1\right\}$;
		\item $\ker\left(\left(A-BB^*\widehat{E}\right)^*\right)=\left\{\left(x_1,x_1\right) \ | \ x_1\in \mathbb{R} \right\}$;
		\item $\mathscr{L}^{-}\left(A-BB^*\widehat{E}\right)=\left\{\left(x_1,-x_1\right) \ | \ x_1\in \mathbb{R} \right\}$;
		\item $\mathscr{L}^{-}\left(\left(A-BB^*\widehat{E}\right)^*\right)=\mbox{span}\left\{\mathbf{e}_2\right\}$.
	\end{itemize}
	We decompose
	\begin{equation}
	\mathbb{R}^n = NO^0(C,A) \oplus \mathscr{L}^{-}\left(A-BB^*\widehat{E}\right)=\mbox{span}\left\{\mathbf{e}_1\right\}\oplus \left\{\left(x_1,-x_1\right) \ | \ x_1\in \mathbb{R} \right\},
	\end{equation}
	with the corresponding projections $\mathbb{P}_1(x_1,x_2)=\left(x_1+x_2,0\right)$ and $\mathbb{P}_2(x_1,x_2)=\left(-x_2,x_2\right)$.
	
	By Proposition \ref{prop_controllability_case_velocity}, there exist $T$-independent $K$ and $\mu >0$ such that
	\begin{equation}\label{turnpike_estimate_velocity_1_example}
	\left|u^T(t)-\hat{u}^T\right|+\left\|\mathbb{P}_2x^T(t)-\hat{x}^T\right\|\leq K\left[\exp\left(-\mu t \right)+\exp\left(-\mu\left(T-t\right)\right)\right],
	\end{equation}
	and
	\begin{equation}\label{turnpike_estimate_velocity_3_example}
	\left|u^T(t)-\hat{u}^T\right|+\left\|\mathbb{P}_1\left[x^T_1(t)-\left(x_{0,1}-tBB^*\hat{q}^T\right)\right]\right\|\leq K\left[\exp\left(-\mu t \right)+\exp\left(-\mu\left(T-t\right)\right)\right],
	\end{equation}
	where $\hat{q}^T\in\ker\left(\left(A-BB^*\widehat{E}\right)^*\right)=\left\{\left(x_1,x_1\right) \ | \ x_1\in \mathbb{R} \right\}$,\\
	$\hat{x}^T\in \mathscr{L}^{-}\left(A-BB^*\widehat{E}\right)=\left\{\left(x_1,-x_1\right) \ | \ x_1\in \mathbb{R} \right\}$ and $\hat{u}^T=-BB^*\widehat{E}\hat{x}^T-BB^*\hat{q}^T$. Moreover,
	\begin{equation}\label{turnpike_estimate_velocity_5_example}
	\left|\hat{u}^T\right|^2+\left\|\hat{x}^T_2\right\|^2=\mbox{dist}\left(\left(\hat{u}^T,\hat{x}^T\right),\mbox{argmin}\left[J_s\right] \right)^2 \leq \frac{K}{T},
	\end{equation}
	where $K$ is independent of the time horizon $T$.
\end{example}

\appendix

\section{Proof of Lemma \ref{lemma_minimizer steady}.}
\label{sec:Proof of Lemma lemma minimizer steady.}

We determine the set of minimizers of \eqref{Js}, to prove Lemma \ref{lemma_minimizer steady}.
\begin{proof}[Proof of Lemma \ref{lemma_minimizer steady}.]
	\textit{Step 1} \ \textbf{Existence of minimizer}\\
	We introduce the equivalence relation in $M$
	\begin{equation*}
	\sim: \hspace{0.3 cm}(x_1,u_1)\sim (x_2,u_2)\hspace{0.6 cm}\mbox{if}\hspace{0.6 cm} v_1=v_2\hspace{0.3 cm}\mbox{and}\hspace{0.3 cm}y_2-y_1\in \ker(A)\cap\ker(C).
	\end{equation*}
	We denote by $[(x,u)]$ the equivalence class of $(x,u)$. Actually, $J_s(x_1,u_1)=J_s(x_2,u_2)$, provided that $(x_1,u_1)\sim (x_2,u_2)$. Then, we are in position to define
	\begin{equation*}
	\left[J_s\right]:M/\sim\longrightarrow\mathbb{R}
	\end{equation*}
	\begin{equation*}
	[(x,u)]\longmapsto J_s(x,u).
	\end{equation*}
	Now, by definition of $J_s$ and ${M}/{\sim}$, for any $r \geq 0$ the sublevel set
	\begin{equation*}
	\mathscr{S}_r\coloneqq \left\{[(x,u)]\in {M}/{\sim} \ | \ J_s(x,u)\leq r\right\}
	\end{equation*}
	is compact. Hence, by the Weierstrass extreme value theorem, there exists global minimizer $[(\overline{u},\overline{x})]$ for $\left[J_s\right]$. Then, $(\overline{u},\overline{x})$ is a global minimizer of $J_s$.\\
	\textit{Step 2} \ \textbf{Conclusion}\\
	By definition of $(\overline{u},\overline{x})$ and $J_s$,
	\begin{equation*}
	\mbox{argmin}(J_s)\supseteq\left\{(\overline{u},\overline{x})\right\}+\left\{0\right\}\times\left[\ker(A)\cap \ker(C)\right].
	\end{equation*}
	Let us now prove the other inclusion by contradiction. Suppose there exists $(x,u)$ global minimizer such that
	\begin{equation*}
	(x,u)\notin \left\{(\overline{u},\overline{x})\right\}+\left\{0\right\}\times\left[\ker(A)\cap \ker(C)\right].
	\end{equation*}
	Then, either $v\neq \overline{u}$ or
	\begin{equation*}
	v=\overline{u}\hspace{0.6 cm}\mbox{and}\hspace{0.6 cm} y-\overline{x}\notin \ker(A)\cap \ker(C).
	\end{equation*}
	In both cases $(v,Cy)\neq (\overline{u},C\overline{x})$. Indeed, in the first case, from $v\neq \overline{u}$, we have $(v,Cy)\neq (\overline{u},C\overline{x})$. In the second case $v=\overline{u}$. Therefore, $A(y-\overline{x})=B(v-\overline{u})=0$. Then, $y-\overline{x}\in \ker(A)$, whence $y-\overline{x}\notin \ker(C)$. Now, the function
	\begin{equation*}
	g: \mathbb{R}^m\times \mathbb{R}^n\longmapsto \mathbb{R}
	\end{equation*}
	\begin{equation*}
	(v,\tilde{y}) \longrightarrow \frac12\left[\|v\|^2+\|\tilde{y}-z\|^2\right]
	\end{equation*}
	is strictly convex.
	Then, $J_s(x,u)=g(v,Cy)\neq g(\overline{u},C\overline{x})=J_s(\overline{u},\overline{x})$. Hence $(x,u)$ is not a minimizer, so obtaining a contradiction. This finishes the proof.
\end{proof}

\section{Kernel and range of the Hamiltonian matrix}
\label{appendixsec:Kernel and range of the Hamiltonian matrix}

Let $A\in\mathcal{M}_{n\times n}(\mathbb{R})$, $B\in\mathcal{M}_{n\times m}(\mathbb{R})$ and $C\in\mathcal{M}_{n\times n}(\mathbb{R})$. We determine the kernel and the range of the Hamiltonian matrix
\begin{equation}\label{Ham_Appendix}
\mbox{Ham}\coloneqq \begin{bmatrix}
A&-BB^*\\
-C^*C&-A^*.
\end{bmatrix}
\end{equation}
The above is the coefficient matrix of the optimality system \eqref{steady_OS} for the steady problem \eqref{steady_functional_linear_quadratic_finite_dimension}.

\begin{lemma}\label{lemma_Hamkerrange}
	Let $\mbox{Ham}$ be the Hamiltonian matrix \eqref{Ham_Appendix}. Then,
	\begin{equation}
		\ker\left(\mbox{Ham}\right)=\left[\ker\left(A\right)\cap \ker\left(C\right)\right]\times \left[\ker\left(A^*\right)\cap \ker\left(B^*\right)\right]
	\end{equation}
	and
	\begin{equation}
		\mbox{Range}\left(\mbox{Ham}\right)=\left[\mbox{Range}\left(A\right) + \mbox{Range}\left(B\right)\right]\times \left[\mbox{Range}\left(A^*\right)+ \mbox{Range}\left(C^*\right)\right].
	\end{equation}
\end{lemma}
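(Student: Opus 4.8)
The plan is to compute the two identities separately and then observe that the range formula follows from the kernel formula by passing to orthogonal complements, since $\mbox{Range}(\mbox{Ham}) = \ker(\mbox{Ham}^*)^{\perp}$. First I would compute $\ker(\mbox{Ham})$ directly. A vector $(x,p) \in \mathbb{R}^n \times \mathbb{R}^n$ lies in $\ker(\mbox{Ham})$ if and only if $Ax - BB^*p = 0$ and $-C^*Cx - A^*p = 0$. The key observation is that these two equations decouple once we take suitable inner products: pairing the first equation with $p$ and the second with $x$ and adding gives $\langle Ax,p\rangle - \|B^*p\|^2 - \|Cx\|^2 - \langle A^*p,x\rangle = 0$, and since $\langle Ax,p\rangle = \langle x, A^*p\rangle$ the cross terms cancel, leaving $\|B^*p\|^2 + \|Cx\|^2 = 0$. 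Hence $B^*p = 0$ and $Cx = 0$. Substituting back into the two defining equations, the first becomes $Ax = 0$ and the second becomes $A^*p = 0$. This shows $\ker(\mbox{Ham}) \subseteq [\ker(A)\cap\ker(C)]\times[\ker(A^*)\cap\ker(B^*)]$, and the reverse inclusion is immediate by plugging in.

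For the range, I would note that $\mbox{Ham}^*$ has exactly the same block structure with $A$ replaced by $A^*$, $BB^*$ unchanged, $C^*C$ unchanged: explicitly $\mbox{Ham}^* = \begin{bmatrix} A^* & -C^*C \\ -BB^* & -A \end{bmatrix}$, which is, up to a relabeling/permutation of the two blocks and the roles of $B$ and $C$, a Hamiltonian matrix of the same type. Applying the kernel computation just established (with the roles of $(B,C)$ swapped) gives $\ker(\mbox{Ham}^*) = [\ker(A^*)\cap\ker(B^*)]\times[\ker(A)\cap\ker(C)]$. Then
\begin{equation*}
\mbox{Range}(\mbox{Ham}) = \ker(\mbox{Ham}^*)^{\perp} = \left([\ker(A^*)\cap\ker(B^*)]\times[\ker(A)\cap\ker(C)]\right)^{\perp}.
\end{equation*}
Using that the orthogonal complement of a product is the product of orthogonal complements, together with $(\ker(A^*)\cap\ker(B^*))^{\perp} = \mbox{Range}(A) + \mbox{Range}(B)$ and $(\ker(A)\cap\ker(C))^{\perp} = \mbox{Range}(A^*) + \mbox{Range}(C^*)$ — each of these being the standard identity $(\ker M_1 \cap \ker M_2)^\perp = \mbox{Range}(M_1^*) + \mbox{Range}(M_2^*)$ — yields exactly the claimed formula for $\mbox{Range}(\mbox{Ham})$.

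I do not expect a genuine obstacle here; the only point requiring a little care is the bookkeeping in identifying $\mbox{Ham}^*$ with a matrix of the same form so that the kernel computation can be reused, and making sure the factors come out in the right order after taking complements. Alternatively, one can avoid the adjoint argument entirely and compute $\mbox{Range}(\mbox{Ham})$ by hand: given $(a,b)$, solve $Ax - BB^*p = a$, $-C^*Cx - A^*p = b$; surjectivity onto $[\mbox{Range}(A)+\mbox{Range}(B)]\times[\mbox{Range}(A^*)+\mbox{Range}(C^*)]$ follows because one may choose $x$ and $p$ freely within complementary subspaces, but the complement-of-kernel route is cleaner and I would present that one.
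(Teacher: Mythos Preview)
Your proposal is correct and follows essentially the same approach as the paper: the kernel is computed by pairing the two equations with $p$ and $x$ respectively so that the cross terms $\langle Ax,p\rangle$ and $\langle x,A^*p\rangle$ cancel, yielding $\|B^*p\|^2+\|Cx\|^2=0$, and the range is obtained via $\mbox{Range}(\mbox{Ham})=\ker(\mbox{Ham}^*)^\perp$ after recognizing $\mbox{Ham}^*$ as a Hamiltonian matrix of the same form with $\tilde A=A^*$, $\tilde B=C^*$, $\tilde C=B^*$.
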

\begin{proof}[Proof of Lemma \ref{lemma_Hamkerrange}]
	\textit{Step 1} \  \textbf{Computation of the kernel}\\
	By definition we have the inclusion
	\begin{equation}
		\left[\ker\left(A\right)\cap \ker\left(C\right)\right]\times \left[\ker\left(A^*\right)\cap \ker\left(B^*\right)\right]\subseteq \ker\left(\mbox{Ham}\right).
	\end{equation}
	We are now going to prove the other inclusion. Take an arbitrary $\left(\overline{x},\overline{p}\right)\in \ker \left(\mbox{Ham}\right)$, which verifies
		\begin{equation}\label{steady_OS_zero}
	\begin{dcases}
	A\overline{x}-BB^*\overline{p}&=0\\
	-A^*\overline{p}-C^*C\overline{x}&=0\\
	\end{dcases}
	\end{equation}
	We start by multiplying the first equation by $\overline{p}$ and the second equation by $\overline{x}$, getting
	\begin{equation}
		0=\left(A\overline{x},\overline{p}\right)-\left(BB^*\overline{p},\overline{p}\right)=\left(\overline{x},A^*\overline{p}\right)-\left\|B^*\overline{p}\right\|^2
	\end{equation}
	and
	\begin{equation}
		0=-\left(\overline{x},A^*\overline{p}\right)-\left(C^*C\overline{x},\overline{x}\right)=-\left(\overline{x},A^*\overline{p}\right)-\left\|C^*\overline{x}\right\|^2.
	\end{equation}
	We sum the above equation, obtaining
	\begin{equation}
		\left\|B^*\overline{p}\right\|^2+\left\|C\overline{x}\right\|^2=0,
	\end{equation}
	whence $\overline{p}\in \ker\left(B^*\right)$ and $\overline{x}\in \ker\left(C\right)$, which, together with \eqref{steady_OS_zero}, leads to
	\begin{equation}\label{steady_OS_zero_3}
	\begin{dcases}
	A\overline{x}&=BB^*\overline{p}=0\\
	A^*\overline{p}&=-C^*C\overline{x}=0.\\
	\end{dcases}
	\end{equation}
	This finishes step 1.\\
	\textit{Step 2} \  \textbf{Computation of the range}\\
	By linear algebra,
	\begin{equation}
		\mbox{Range}\left(\mbox{Ham}\right)=\ker\left(\mbox{Ham}^*\right)^{\perp}.
	\end{equation}
	Now, the transpose of $\mbox{Ham}$ reads as
	\begin{equation}
		\mbox{Ham}^*=\begin{bmatrix}
		A^*&-C^*C\\
		-BB^*&-A.
		\end{bmatrix}
	\end{equation}
	Set $\tilde{A}\coloneqq A^*$, $\tilde{B}\coloneqq C^*$ and $\tilde{C}\coloneqq B^*$. The above matrix is the hamiltonian matrix for $\tilde{A}$, $\tilde{B}$ and $\tilde{C}$. Then, by using the results of step 1, we have
	\begin{equation}
		\ker\left(\mbox{Ham}^*\right)=\left[\ker\left(A^*\right)\cap \ker\left(B^*\right)\right]\times \left[\ker\left(A\right)\cap \ker\left(C\right)\right],
	\end{equation}
	whence
	\begin{equation}
		\ker\left(\mbox{Ham}^*\right)^{\perp}=\left[\mbox{Range}\left(A\right)+ \mbox{Range}\left(B\right)\right]\times \left[\mbox{Range}\left(A^*\right)+ \mbox{Range}\left(C^*\right)\right],
	\end{equation}
	as desired.
\end{proof}

\section{Proof of Lemma \ref{lemma_1}}
\label{appendixsec:Proof of Lemma lemma 1}

\begin{proof}[Proof of Lemma \ref{lemma_1}]
	\textit{Step 1} \  \textbf{Stable, antistable and critical splitting}\\
	We have
	\begin{equation*}
	\mathbb{R}^n=\mathscr{L}^{-}(H)\oplus \mathscr{L}^{0}(H)\oplus\mathscr{L}^{+}(H),
	\end{equation*}
	where $\oplus$ stands for the direct sum. Then, let $y$ be a solution to \eqref{linear_nohomo}. Denote by $y_1$, $y_2$ and $y_3$ resp. the projections of $x$ onto $\mathscr{L}^{-}(H)$, $\mathscr{L}^{0}(H)$ and $\mathscr{L}^{+}(H)$. Then, $x=y_1+y_2+y_3$ and, for $i=1,2,3$,
	\begin{equation*}
	\dot{y}_i=Hy_i\hspace{2.8 cm}  \mbox{in} \hspace{0.10 cm}(0,T).
	\end{equation*}
	\textit{Step 2} \ \textbf{Estimate for the stable part}\\
	We have
	\begin{equation*}
	\dot{y}_1=Hy_1\hspace{2.8 cm}  \mbox{in} \hspace{0.10 cm}(0,T),
	\end{equation*}
	All the eigenvalues of $H\hspace{-0.1 cm}\restriction_{\mathscr{L}^{-}(H)}$ have strictly negative real part, where we have denoted by $L_H$ the linear operator associated to the matrix $H$. Then,
	we have, for any $t\in [0,T]$
	\begin{equation}\label{estimate_stable_pw}
	\|y_1(t)\|\leq K\exp(-\mu t)\|y_1(0)\|\leq K\exp(-\mu t)\|y(0)\|,
	\end{equation}
	the constant $K$ depending only on $H$.\\
	\textit{Step 3} \ \textbf{Estimate for the unstable part}\\
	By definition
	\begin{equation}\label{unstable_part}
	\dot{y}_3=Hy_3\hspace{2.8 cm}  \mbox{in} \hspace{0.10 cm}(0,T).
	\end{equation}
	Then, $\tilde{y}_3(t)\coloneqq y_3(T-t)$ solves
	\begin{equation}\label{reversed_unstable_part}
	\dot{\tilde{y}}_3=-H\tilde{y}_3\hspace{2.8 cm}  \mbox{in} \hspace{0.10 cm}(0,T).
	\end{equation}
	Now, all the eigenvalues of $-H\hspace{-0.1 cm}\restriction_{\mathscr{L}^{+}(H)}$ have strictly negative real part. Then, as in Step 1,
	\begin{equation}\label{estimate_unstable_pw}
	\|y_3(t)\|=\|\tilde{y}_3(T-t)\|\leq K\exp(-\mu (T-t))\|\tilde{y}_3(0)\|\leq K\exp(-\mu (T-t))\|y(T)\|.
	\end{equation}
	The estimates 
	\eqref{estimate_stable_pw} and \eqref{estimate_unstable_pw} yield \eqref{est_L_inf}.
\end{proof}

\bibliography{my_references}
\bibliographystyle{siam}

\medskip
Received xxxx 20xx; revised xxxx 20xx.
\medskip

\end{document}